\newcommand{\Rey}[0]{Re}
\newcommand{\bT}{\boldsymbol{T}}
\newcolumntype{L}{>{\centering\arraybackslash}m{10cm}}
\newcommand\blfootnote[1]{%
  \begingroup
  \renewcommand\thefootnote{}\footnote{#1}%
  \addtocounter{footnote}{-1}%
  \endgroup
}
\definecolor{header1}{cmyk}{0,0,0,1}
\newtheorem{theorem}{Theorem}[section]
\newtheorem{lemma}[theorem]{Lemma}
\DeclareMathOperator*{\argmax}{arg\rm{}max}
\DeclareMathOperator*{\argmin}{arg\rm{}min}
\newcommand{\cM}{\mathcal{M}}
\newcommand{\diag}{\textrm{diag}}
\newcommand{\bL}{\boldsymbol{L}}
\newcommand{\bJ}{\boldsymbol{J}}
\newcommand{\bI}{\boldsymbol{I}}
\newcommand{\bd}{\boldsymbol{d}}
\newcommand{\bc}{\boldsymbol{c}}
\newcommand{\bC}{\boldsymbol{C}}
\newcommand{\bA}{\boldsymbol{A}}
\newcommand{\bX}{\boldsymbol{X}}
\newcommand{\bFc}{\boldsymbol{\mathcal{F}}}
\newcommand{\bYc}{\boldsymbol{\mathcal{Y}}}
\newcommand{\bXc}{\boldsymbol{\mathcal{X}}}
\newcommand{\bRc}{\boldsymbol{\mathcal{R}}}
\newcommand{\bSc}{\boldsymbol{\mathcal{S}}}
\newcommand{\bY}{\boldsymbol{Y}}
\newcommand{\bF}{\boldsymbol{F}}
\newcommand{\bR}{\boldsymbol{R}}
\newcommand{\bQ}{\boldsymbol{Q}}
\newcommand{\bV}{\boldsymbol{V}}
\newcommand{\bS}{\boldsymbol{S}}
\newcommand{\bSigma}{\boldsymbol{\Sigma}}
\newcommand{\bU}{\boldsymbol{U}}
\newcommand{\bb}{\boldsymbol{b}}
\newcommand{\bx}{\boldsymbol{x}}
\newcommand{\bLambda}{\boldsymbol{\Lambda}}
\newcommand{\ba}{\boldsymbol{a}}
\newcommand{\bH}{\boldsymbol{H}}
\newcommand{\E}{\mathbb{E}}
\newcommand{\bh}{\boldsymbol{h}}
\newcommand{\bPsi}{\boldsymbol{\Psi}}
\newcommand{\by}{\boldsymbol{y}}
\newcommand{\bv}{\boldsymbol{v}}
\newcommand{\bu}{\boldsymbol{u}}
\newcommand{\bxi}{\boldsymbol{\xi}}
\newcommand{\bD}{\boldsymbol{D}}
\newcommand{\Var}{\mathrm{Var}}
\newcommand\e{\textrm{e}}
\tikzset{cross/.style={cross out, draw=blue, minimum size=2*(#1-\pgflinewidth), inner sep=0pt, outer sep=0pt},
	cross/.default={1pt}}
\pgfplotsset{compat=1.13}
\newcommand{\myuline}[1]{%
  \uline{\phantom{#1}}%
  \llap{\contour{white}{#1}}%
}
\newtheoremstyle{mylemmastyle}
  {3pt} 
  {3pt} 
  {\itshape} 
  {} 
  {\bfseries} 
  {.} 
  {\newline} 
  {\thmname{#1} \thmnumber{#2}: \thmnote{\normalfont\myuline{#3}}} 
\theoremstyle{mylemmastyle}
\algrenewcommand{\algorithmiccomment}[1]{\hfill #1}
\renewcommand\d{\textrm{d}}
\definecolor{col0}{RGB}{82,173,2}
\definecolor{col1}{RGB}{212,173,2}
\definecolor{col2}{RGB}{255, 111, 0}
\title{\vspace{-.4in}\textbf{Physics-informed dynamic mode decomposition (piDMD)}\vspace{-.1in}}
\author[1]{ Peter J. Baddoo$^*$}
\author[2]{ Benjamin Herrmann}
\author[3]{ Beverley J. McKeon}
\author[4]{ J. Nathan Kutz}
\author[5]{ Steven L. Brunton}
\affil[1]{\small Department of Mathematics, Massachusetts Institute of Technology,
 Cambridge, MA 02139, USA}
\affil[2]{Department of Mechanical Engineering, University of Chile, Beauchef 851, Santiago, Chile}
\affil[3]{Graduate Aerospace Laboratories, California Institute of Technology, Pasadena CA 91125, USA}
\affil[4]{Department of Applied Mathematics, University of Washington, Seattle, WA 98195, USA}
\affil[5]{Department of Mechanical Engineering, University of Washington, Seattle, WA 98195, USA}
\date{}
\renewcommand{\i}{\textrm{i}}
\begin{document}
\maketitle
\blfootnote{$^*$ Corresponding author (baddoo@mit.edu).}
\date{}
\vspace{-1.25cm}
\begin{abstract}
	In this work, we demonstrate how physical principles -- such as symmetries, invariances, and conservation laws -- can be integrated into the {\em dynamic mode decomposition} (DMD). 
	DMD is a widely-used data analysis technique that extracts low-rank modal structures and dynamics from high-dimensional measurements.
    However, DMD frequently produces models that are sensitive to noise, fail to generalize outside the training data, and violate basic physical laws. 
    Our physics-informed DMD (piDMD) optimization, which may be formulated as a Procrustes problem, restricts the family of admissible models to a matrix manifold that respects the physical structure of the system. 
    We focus on five fundamental physical principles --  
	conservation, self-adjointness, localization, causality, and shift-invariance --
    and derive several closed-form solutions and efficient algorithms for the corresponding piDMD optimizations.
    With fewer degrees of freedom, piDMD models are less prone to overfitting, require less training data, and are often less computationally expensive to build than standard DMD models.  
	We demonstrate piDMD on a range of challenging problems in the physical sciences, including energy-preserving fluid flow, travelling-wave systems, the Schrödinger equation, solute advection-diffusion, a system with causal dynamics, and three-dimensional transitional channel flow.
	In each case, piDMD significantly outperforms standard DMD in metrics such as spectral identification, state prediction, and estimation of optimal forcings and responses.
\end{abstract}

\section{Introduction}
\label{Sec:Intro}
Integrating partial knowledge of physical principles into data-driven techniques is a primary goal of the 
scientific machine learning (ML) community~\cite{Karniadakis2021}.
Physical principles -- such as conservation laws, symmetries, and invariances -- can be incorporated into ML
algorithms in the form of inductive biases, thereby ensuring that the learned models are constrained to the correct physics.
Recent successful examples of ML algorithms that have been modified to respect physical principles include
neural networks~\cite{Behler2007,Greydanus2019,Reichstein2019,Raissi2019,Cranmer2020,Raissi2020,Wang2020,Lu2021,Li2021},
kernel methods~\cite{BaddooLANDO,Klus2021},
deep generative models~\cite{Shah2019}, 
and sparse regression~\cite{Brunton2016,Rudy2017,Loiseau2018,guan2020sparse,zanna2020data}.
These examples demonstrate that incorporating partially-known physical principles into machine learning architectures
can increase the accuracy, robustness, and generalizability of the resulting models, while simultaneously
decreasing the required training data.
In this work, we integrate knowledge of physical principles into one of the most widely-used methods in data-driven dynamical systems research: the dynamic mode decomposition~\cite{Schmid2010,Rowley2009,Tu2014,Kutz2016,Askham2018,Schmid2022}.

The dynamic mode decomposition (DMD) is a data diagnostic technique that 
extracts coherent spatial-temporal patterns from high-dimensional time series data~\cite{Schmid2010,Schmid2022}.
Although DMD originated in the fluid dynamics community~\cite{Schmid2010},
the algorithm has since been applied to a wealth of dynamical systems including in
epidemiology~\cite{proctor2015ih}, 
robotics~\cite{berger2014ieee,abraham2019ieee},
neuroscience~\cite{brunton2016extracting}, 
quantum control~\cite{Goldschmidt2021},
power grids~\cite{susuki2011a},
and plasma physics~\cite{Taylor2018,kaptanoglu2020pop}.
Despite its widespread successes, DMD is highly sensitive to noise~\cite{Bagheri2014pof,Dawson2016,Hemati2017tcfd},
fails to capture travelling wave physics,
and can produce overfit models that do not generalize.
Herein, we demonstrate that integrating physics into the learning framework can help address these challenges.

Suppose that we are studying a dynamical system defined by $\dot{\bx} = \bF(\bx)$ (continuous time) or $\bx_{k+1} = \bF(\bx_k)$ (discrete time) where $\bF: \mathbb{R}^n \rightarrow \mathbb{R}^n$ is unknown. 
DMD identifies the best low-rank linear approximation of $\bF$ given a collection of $m$ pairs of measurements $\{\bx_j, \by_j\}$; 
in other words, DMD seeks a rank $r$ matrix $\bA \in \mathbb{R}^{n\times n}$ such that 
\begin{align}\label{Eq:DMDAMatrix}
    \by_{j} \approx \bA \bx_j
    \end{align}
    for $j=1,\dots, m$.
Arranging the data measurements into $n\times m$ matrices $\bX$ and $\bY$ allows us to phrase the above formally as 
\begin{align}
	\textnormal{\myuline{DMD regression:}}\qquad \qquad
	&\argmin_{\textrm{rank }(\bA) = r} \|\bY - \bA \bX \|_F. \qquad \qquad \qquad\label{Eq:DMD}
\end{align}
After approximately solving \eqref{Eq:DMD}, the DMD process computes the dominant spectral properties of the learned linear operator
\cite{Schmid2010,Rowley2009,Tu2014,Kutz2016,Askham2018,Schmid2022}.
The rank-$r$ constraint in \eqref{Eq:DMD} is motivated by the assumed modal structure of the system but doesn't account for other important physical properties. 
For example, one limitation of DMD is that the solution of \eqref{Eq:DMD} lies within the span of $\bY$, so the learned model rarely generalizes outside the training regime. We address this limitation and others by embedding partial physics knowledge into the learning process.

In this work, we incorporate physical principles into the optimization \eqref{Eq:DMD} by constraining the solution matrix $\bA$ to
lie on a matrix manifold $\cM \subseteq \mathbb{R}^{n\times n}$:
\begin{align}
	\textnormal{\myuline{piDMD regression:}} \qquad  \qquad
	&\argmin_{\bA \in \cM} \|\bY - \bA \bX \|_F .\qquad \qquad\quad \qquad \label{Eq:procrustes}
\end{align}
\begin{figure}[t]
\vspace{-.4cm}
	\centering
	\includegraphics[width=.83\linewidth]{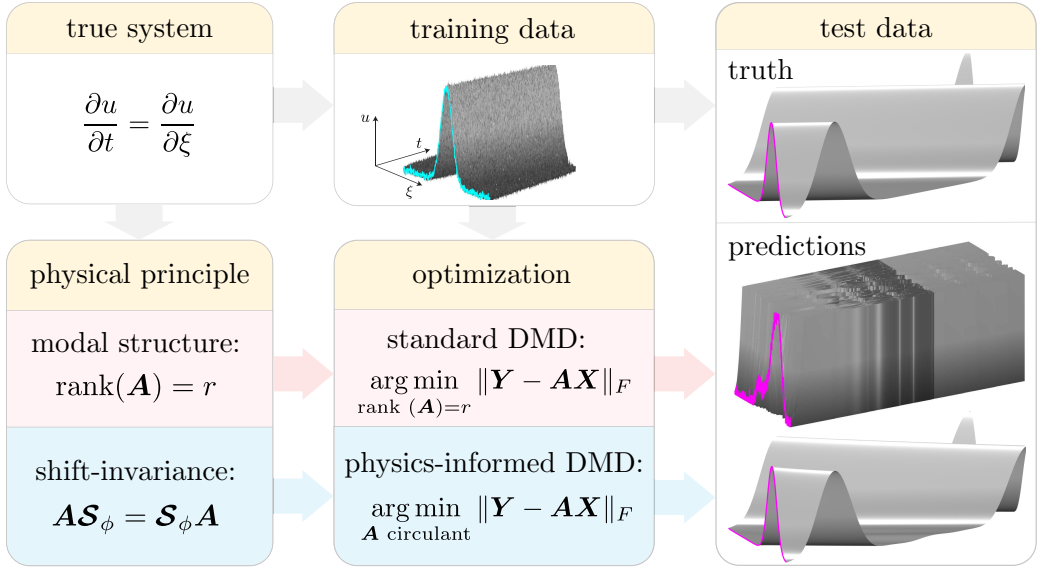}
	\vspace{-.00in}
	\caption{Comparing standard dynamic mode decomposition (specifically, optimized DMD \cite{Askham2018})
	and physics-informed dynamic mode decomposition applied to the 
advection equation.
The data is contaminated with 2\% additive Gaussian noise.
Having trained DMD models, we then perform predictions for different initial conditions.
Standard DMD fails whereas piDMD produces a faithful prediction. In the figure, $\boldsymbol{\mathcal{S}}_\phi$ represents the shift operator (see \S \ref{Sec:shiftInvariant}).
}%
\vspace{-.2cm}
\label{Fig:schematic}
\end{figure}%
\noindent The matrix manifold $\cM$ is dictated by the known physics of the system at hand. 
For example, we may select $\cM$ such that its members satisfy certain symmetries that we known are obeyed by the system at hand.
We call \eqref{Eq:procrustes} `\emph{physics-informed} DMD' (piDMD) as the optimization integrates 
underlying knowledge of the system physics into the learning framework.
Again, the spectral features of the solution of \eqref{Eq:procrustes} can be computed to give insight into the modes that dominate the system and their dynamics.
The low-rank DMD regression \eqref{Eq:DMD} is a special case of piDMD where $\cM$ is the manifold of rank-$r$ matrices.
However, piDMD models are not always low rank (see, for example, \S \ref{Sec:local}) but nevertheless have few degrees of freedom due to the physics-informed constraints.
Constraining the solution matrix improves the generalizability of the learned model,
reduces sensitivity to noise and reduces the demand for large training sets.
More broadly, constraining a linear, dimensionality-reduced model to a manifold $\cM$ has been shown to result in a variety of optimization formulations and techniques which can improve low-rank approximations~\cite{cunningham2015linear}.
In figure \ref{Fig:schematic} we compare the performance of standard \eqref{Eq:DMD} and physics-informed \eqref{Eq:procrustes} DMD
for a simple travelling wave.
We constrain the piDMD model to the shift-invariant symmetry of the underlying system.
As a result, the piDMD model produces an accurate prediction of the future dynamics, whereas the standard DMD model blows up immediately.
This simple example illustrates that embedding basic physics into DMD can substantially improve the algorithm's performance.

The optimization problem \eqref{Eq:procrustes} is well-known in the statistical literature as a \mbox{\emph{Procrustes problem}}~\cite{Hurley1962,Soderkvist1993,Trendafilov2002,Gower2007,Higham1988,TenBerge1993,Arun1992,Gillis2018,Schonemann1966,Pumir2021,Viklands2006,Elden1999,Andersson2006}.
Our recasting of the DMD regression as a Procrustes problem
is a new connection, and is the basis of piDMD.
This perspective enables us to leverage the substantial extant work on Procrustes problems into new application areas.
The literature contains many exact solutions for Procrustes problems,
including notable cases of orthogonal matrices~\cite{Schonemann1966},
and symmetric matrices~\cite{Higham1988}.

A significant challenge in physics-informed ML architectures is to develop implementations that 
incorporate known physics but also scale well to higher dimensions~\cite{Karniadakis2021}.
A major contribution of this work is several exact solutions, in terms of standard linear algebra operations, of 
Procrustes problems of physical relevance.
Where possible, we have made use of matrix factorizations and low-rank representations to alleviate the computational burden of implementation;
it is rarely necessary to form $\bA$ explicitly when computing its spectral properties or other diagnostics. 
For complicated matrix manifolds, an exact solution of the Procrustes
problem \eqref{Eq:procrustes} may be intractable and we must resort to algorithmic approaches.
Fortunately, optimization on matrix manifolds is a mature field~\cite{Absil2009},
and many algorithms are implemented in the open source
software package `Manopt' \cite[\url{www.manopt.org}]{Boumal2014}.

The remainder of the paper is arranged as follows.
In section \ref{Sec:background} we provide background information on DMD and Procrustes problems.
Then, in section \ref{Sec:piDMD}, we describe the broad framework of piDMD.
We consider a range of applications in section \ref{Sec:examples}, with a focus
on shift-invariant, conservative, self-adjoint, local and causal systems.
Section \ref{Sec:conclusion} concludes with a discussion of the
limitations of piDMD and suggests several future research directions.
An open source implementation of piDMD with support for over 30 matrix manifolds 
is available in \textsc{Matlab} at \url{www.github.com/baddoo/piDMD}.
\section{Mathematical background}
\label{Sec:background}
In this section we provide further details on DMD and Procrustes problems.
Throughout the article, we assume that we have access to $m$ snapshots pairs
of $n$ features each: \mbox{$\{(\bx_j,\, \by_j), \, j = 1,\dots, m\}$}. 
For example, $\bx_j$ may be a discretized fluid flow field at time $t_j$ and $\by_j$ may be the flow field at the next time step $t_{j+1}$.  
It is convenient to arrange the data into $n\times m$ snapshot matrices of the form
%
\begin{align}
\bX&
=  \begin{bmatrix} | & | & | \\ \bx_1& \cdots & \bx_{m} \\ | & | & |\end{bmatrix}
= \begin{bmatrix} 
		\rule[0.5ex]{1.2em}{0.55pt} & \tilde{\bx}_1 & \rule[0.5ex]{1.2em}{0.55pt} \\[-1ex]
		\rule[.8ex]{1.2em}{0.55pt} & {\vdots} & \rule[0.8ex]{1.2em}{0.55pt} \\
		\rule[0.5ex]{1.2em}{0.55pt} & \tilde{\bx}_n & \rule[0.5ex]{1.2em}{0.55pt} \\
	\end{bmatrix}
, \qquad
\bY
=  \begin{bmatrix} | & | & | \\ \by_1& \cdots & \by_{m} \\ | & | & |\end{bmatrix}
= \begin{bmatrix} 
		\rule[0.5ex]{1.2em}{0.55pt} & \tilde{\by}_1 & \rule[0.5ex]{1.2em}{0.55pt} \\[-1ex]
		\rule[.8ex]{1.2em}{0.55pt} & {\vdots} & \rule[0.8ex]{1.2em}{0.55pt} \\
		\rule[0.5ex]{1.2em}{0.55pt} & \tilde{\by}_n & \rule[0.5ex]{1.2em}{0.55pt} \\
	\end{bmatrix}
	\label{Eq:SnapshotMatrices}
\end{align}
so that rows $i$ of $\bX$ and $\bY$
are the measurements of the $i$-th features and columns $j$  are the $j$-th temporal snapshots. Henceforth, we use $\tilde{\cdot}$ to represent a row vector.
\subsection{Dynamic mode decomposition}
\label{Sec:DMD}

DMD was initially proposed as a dimensionality reduction technique
that extracts dominant spatio-temporal coherent structures from high-dimensional time-series data \cite{Schmid2010}.
In particular, DMD identifies the leading-order spatial eigenmodes of the matrix $\bA$ in \eqref{Eq:DMDAMatrix}, along with a linear model for how the amplitudes of these coherent structures evolve in time. 
DMD has been applied to a range of systems, as summarized in the monograph \cite{Kutz2016} and
the recent review by Schmid \cite{Schmid2022}. 

To address the challenges associated with DMD, researchers have derived many variations on the original algorithm, including
sparsity promoting DMD~\cite{Jovanovic2014},
DMD with control~\cite{Proctor2016}, 
noise-robust variants~\cite{Bagheri2014pof,Dawson2016,Hemati2017tcfd,Askham2018}, 
recursive DMD~\cite{Noack2016jfm},
online DMD~\cite{hemati2014pof,Zhang2019}, 
and versions for under-resolved data in space or time~\cite{Brunton2015jcd,Gueniat2015pof,Tu2014ef}. 
At present, the most widely used variant is `exact DMD' \cite{Tu2014}, which phrases the DMD solution in terms of the Moore--Penrose pseudoinverse.
This solution produces a more even distribution of errors between the terms.
Due to its simplicity and widespread use, most of the comparisons in this paper are made between exact DMD and piDMD.

It is assumed that each $\bx_j$ and $\by_j$ are connected by an unknown dynamical system of the form $\by_j = \bF(\bx_j)$; for discrete-time dynamics $\by_j = \bx_{j+1}$ and for continuous-time dynamics $\by_j = \dot{\bx}_j$.
DMD aims to learn the dominant behaviour of $\bF$ by finding the best
linear approximation for $\bF$ given the data and then performing 
diagnostics on that approximation.
Thus, DMD seeks the linear operator $\bA$ that best maps the snapshots in the set $\{ \bx_j \}$ to those in the set $\{ \by_j \}$:
\begin{align}
\by_j \approx \boldsymbol{A}\bx_j \qquad \qquad \textrm{for } j = 1, \dots, m. 
	\label{Eq:dmd1}
\end{align}
Expressed in terms of the snapshot matrices in \eqref{Eq:SnapshotMatrices}, the linear system in \eqref{Eq:dmd1} becomes
\begin{align}
{\boldsymbol{Y} \approx \boldsymbol{A}\boldsymbol{X}},
\end{align} 
and the optimization problem for $\bA$ is given by \eqref{Eq:DMD}.
The minimum-norm solution for $\bA$ is given by
\begin{equation}
\bA = 
\bY \bX^{\dagger} = \bY \bV \bSigma^\dagger \bU^\ast,
\label{Eq:exactDMD}
\end{equation}
where $\dagger$ indicates the Moore--Penrose pseudoinverse \citep{Golub2013} and $\boldsymbol{X}=\boldsymbol{U\Sigma V}^\ast$ is the singular value decomposition.
%
In many applications, the state dimension $n$ is very large, and
forming or storing $\bA$ explicitly becomes impractical.
Instead, we use a rank-$r$ approximation for $\bA$, denoted by $\hat{\bA}$, where $r \ll n$.
To form $\hat{\bA}$, we construct the optimal rank-$r$ approximation for $\bX$ using the truncated singular value decomposition~\citep{Eckart1936}:
$\bX\approx\bU_r\bSigma_r \bV_r^\ast$.
We then project $\bA$ onto the leading $r$ principal components of $\bX$ as
\begin{equation}
\hat{\bA} = \bU_r^\ast \bA \bU_r 
= \bU_r^\ast \bY \bV_r \bSigma_r^{-1}.
\end{equation}
It is now computationally viable to compute the eigendecomposition of $\hat{\bA}$ as
\begin{equation}
\hat{\bA} \hat{\bPsi} = \hat{\bPsi} \boldsymbol{\Lambda}.
\end{equation}
The eigenvectors of $\bA$ can be approximated from the reduced eigenvectors $\bPsi$ by \cite{Tu2014}
\begin{equation}
\bPsi = \bY \bV \bSigma^{-1} \hat{\bPsi}.
\end{equation}
This eigendecomposition is connected to the Koopman operator
of the system, and allows reconstructions and predictions \cite{Rowley2009,Mezic2013,Kutz2016,Brunton2021}.
For example, for a discrete-time system (i.e., $\by_k = \bx_{k+1}$) with evenly spaced samples in time, then the eigenvectors form a linearly independent set and
\begin{align}
\bx_j = \bPsi \bLambda^{j-1} \bb
\end{align}
where the vector $\bb$ contains the weights of the modes in the initial condition: $\bb = \bPsi^{\dagger}\bx_1$.
From the above, it is clear that the eigenvalues $\bLambda$ govern the temporal
behaviour of the system and the eigenvectors $\bPsi$ are the spatial modes.

\subsection{Procrustes problems}
\label{Sec:procrustes}
Procrustes problems \eqref{Eq:procrustes} comprise of finding the optimal transformation between two matrices subject to certain constraints on the class of admissible transformations \cite{Schonemann1966,Gower2007,Higham1988}.
According to Greek mythology, Procrustes was a bandit who would
stretch or amputate the limbs of his victims to force
them to fit onto his bed.
Herein, $\bX$ plays the role of Procrustes' victim, $\bY$ is the bed, and $\bA$ is the `treatment' (stretching or amputation).%
\footnote{This terminology was first introduced by Hurley and Cattell \cite{Hurley1962}.}
Procrustes problems \eqref{Eq:procrustes} seek to learn the treatment $\bA$ that best represents the data measurements $\bX$ and $\bY$.
The minimization is usually phrased in the Frobenius norm.
Procrustes analysis finds relevance in a wealth of fields including sensory analysis \cite{Dijksterhuis1994},
data visualization \cite{Cox2008},
neural networks \cite{Zhang2016},
climate science \cite{Richman1986}, and
solid mechanics \cite{Higham1988}.
A summary is available in the monograph \cite{Gower2007}.

The earliest and most common formulation is the 
`orthogonal Procrustes problem' \cite{Schonemann1966,Gower2007}.
Suppose that we have two sets of measurements $\bX$ and $\bY$ that 
we know are related by an unitary (orthogonal) transformation (i.e., a rotation or reflection).
The goal is to learn the best unitary transformation that relates
the data measurements.
Thus, $\bA$ is constrained to be a unitary matrix and the minimization problem is
\begin{flalign}
	\argmin_{\bA^\ast \bA = \bI} \|\bY - \bA \bX \|_F. &
	\label{Eq:orthProc}
\end{flalign}
The solution to \eqref{Eq:orthProc} was derived by Sch\"onemann \cite{Schonemann1966} as
\begin{align}
	\bA = \bU_{YX} \bV_{YX}^\ast 
	\label{Eq:orthSol}
\end{align}
where $\bU_{YX} \bSigma_{YX} \bV_{YX}^\ast = \bY \bX^\ast$ is a full singular value decomposition.
Alternatively, $\bA = \bU_P$, where $\bU_{P} \bH_P = \bY \bX^\ast$ is 
a polar decomposition.
The solution is unique if and only if $\bY \bX^\ast$ is full-rank.
As we show in section \ref{Sec:conservative}, a unitary matrix 
corresponds to an energy-preserving operator.

There are many solutions for Procrustes problems available in the literature
for different matrix constraints \cite{Higham1988}.
When exact solutions are not possible, algorithmic solutions can be effective \cite{Boumal2014}.

%
%
%
%
%
%
\section{Physics-informed dynamic mode decomposition (piDMD)}
\label{Sec:piDMD}


Incorporating physics into ML algorithms involves supplementing an existing technique with additional biases.
Usually, ML practitioners take one of three approaches \cite{Karniadakis2021}.
First, observational biases can be embedded through data augmentation techniques;
however, augmented data is not always available, and incorporating additional training
samples can become computationally expensive.
Second, physical constraints can be included by suitably penalising the loss function~\cite{Wang2020}.
Third, inductive biases can be incorporated directly into 
the machine learning architecture in the form of mathematical constraints~\cite{Loiseau2018}.
As noted by \cite{Karniadakis2021}, this third approach is arguably the most principled method
since it produces model that strictly satisfy the physical constraints.
piDMD falls into the final category.

piDMD incorporates physical principles by constraining the matrix manifold of the 
DMD regression problem \eqref{Eq:procrustes}.
We abandon the low-rank representation of $\bA$ typically sought 
in model order reduction in favour of alternative or additional matrix 
structures more relevant to the physical problem at hand.
In figure \ref{Fig:matrixPictures} we illustrate the matrix structures 
used in this paper, along with their corresponding physical principle and references
for the optimal solutions.

\begin{figure}[t]
	\centering
	\includegraphics[width=0.8\linewidth]{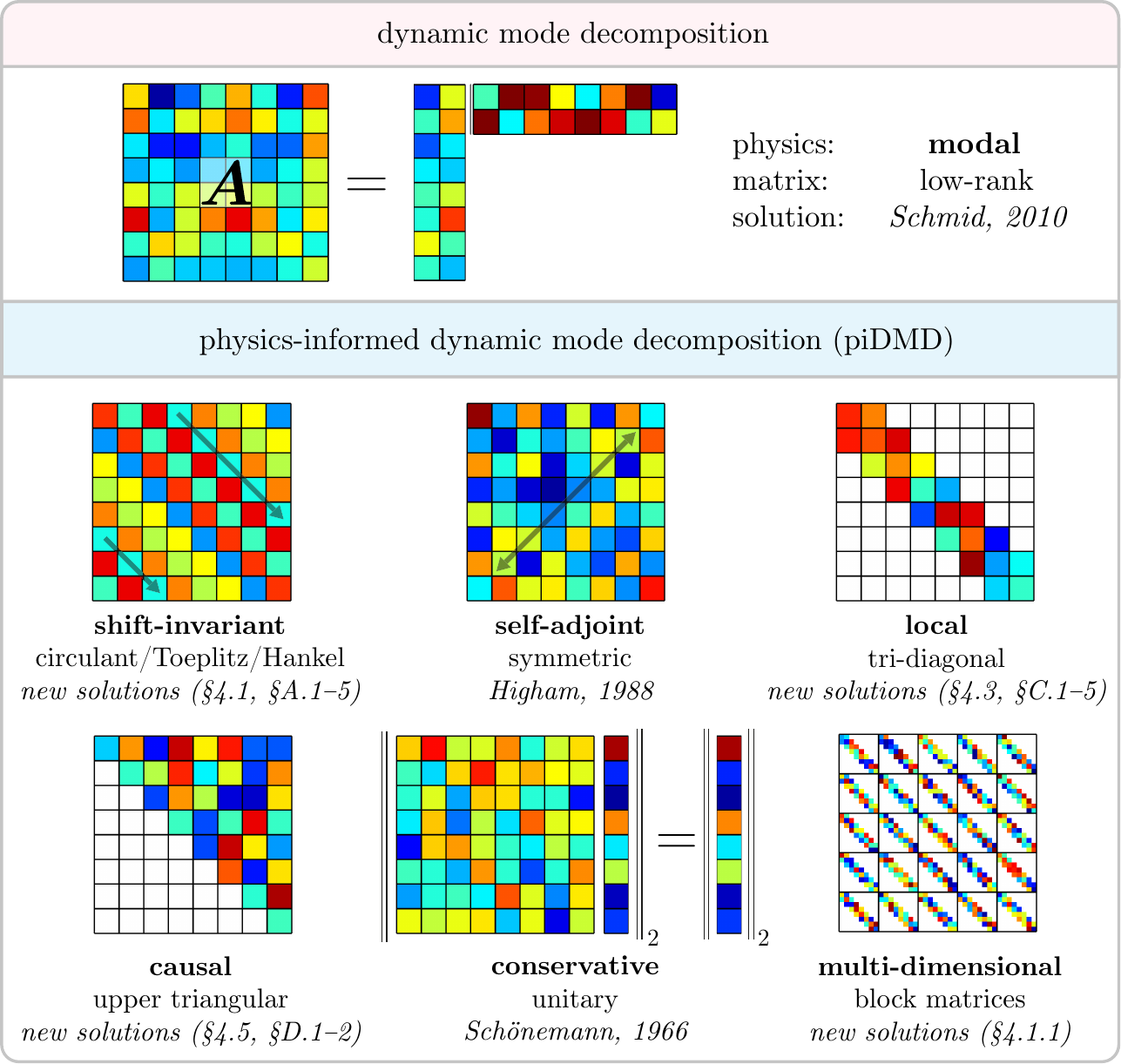}
	\caption{Visual illustrations of the types of matrices, their 
	corresponding physical principles and references to the solutions of the
corresponding optimization problem \eqref{Eq:procrustes}.}
\label{Fig:matrixPictures}
\end{figure}

Analysing a system with the piDMD framework requires four steps: modeling, interpretation, optimization, and diagnostics:
\begin{enumerate}
\item \myuline{Modeling}:
When studying a system from a data-driven perspective, it is common to have
some partial knowledge of the underlying physics of the system.
This first step in piDMD asks the user to summarise the known or suspected physical properties of the system at hand.
\item \myuline{Interpretation:}
Having determined the physical principle we would like to enforce, we must translate
these laws into the matrix manifold to which the linear model should be constrained.
This step involves knowledge of the data collection procedure such as the spatial grid or the energy inner product.
\item \myuline{Optimization:}
Equipped with a target matrix manifold, we may now solve the relevant Procrustes problem \eqref{Eq:procrustes}.
By suitably modifying the optimization routine, we can guarantee that the resulting
model satisfies the physical principle identified in step 1.
\item \myuline{Diagnostics:}
The final step of piDMD involves extracting physical information from the learned model $\bA$.
For example, one may wish to analyse the spectrum or modes, compute the resolvent modes~\cite{herrmann2021jfm}, perform predictions,
or investigate other diagnostics.
\end{enumerate}

As an example, consider the travelling wave solution explored in the introduction (figure \ref{Fig:schematic}).
We began with the physical principle that the system is shift invariant (`modeling'), and this 
lead us to seek a circulant matrix model (`interpretation').
We found a solution to the corresponding Procrustes problem (`optimization')
and investigated the predictive behaviour of the model (`diagnostics').

While each of the four steps can be problematic,
the optimization step is arguably the most conceptually and computationally difficult.
In particular, finding a solution for the desired Procrustes problem can be challenging. Moreover, any solutions are quite peculiar to the matrix manifold under consideration: the 
symmetric Procrustes problem is largely unconnected to, say, the tridiagonal Procrustes problem. 
Thus, in this article, we present many new solutions for \eqref{Eq:procrustes} with different physics-informed matrix manifold constraints.
When exact solutions are not possible (e.g., if the manifold constraint is quite complicated)
there are sophisticated algorithmic solutions available \cite{Boumal2014}.

Standard DMD exploits the low-rank structure of $\bA$ to efficiently perform diagnostics on the learned model.
Some of the manifolds we consider (such as circulant, tridiagonal, upper triangular) do not have an obvious
or useful low-rank approximation.
Instead, these matrices often have an alternative structure that can be exploited to perform fast diagnostic tests.
For example, tridiagonal matrices rarely have a meaningful low-rank approximation, but nevertheless admit a fast
eigendecomposition and singular value decomposition \cite{Golub2013}.

Some of the matrix manifolds we consider
(such as symmetric, triangular, tridiagonal or circulant)
can be phrased as linear equality constraints and can, in principle, be solved
with linear-equality constrained least squares \cite{Golub2013}.
However, the number of equality constraints needed is $\mathcal{O}(n^2)$
so the resulting least squares matrix will have $\mathcal{O}(n^2)$ rows,
which is intractably large in most applications.
Thus, we avoid phrasing the piDMD constraints in terms of linear equality constraints and instead 
exploit properties of the matrix manifold to find solutions that can be efficiently implemented
(see sections \ref{Sec:selfAdjoint} and \ref{Sec:shiftInvariant}, for example).
\section{Applying piDMD to enforce canonical physical structures}
\label{Sec:examples}
We now present a range of examples of piDMD.
Each of the following sections investigates a detailed application of piDMD for a specific physical principle.
A summary is illustrated in figure \ref{Fig:grid}.
The physical principles, corresponding matrix structures, and optimal solutions and extensions are summarised below:
\begin{enumerate}
  \setlength\itemsep{0.3em}
    \item[\S \ref{Sec:shiftInvariant}:]
    \myuline{Shift-invariant:}
    {circulant (and symmetric, skew-symmetric, or unitary, \S \ref{Ap:circulantSymmetric}), low rank (\S \ref{Ap:circulantLowRank}), non-equally spaced samples (\S \ref{Ap:nufft}), total least squares (\S \ref{Ap:circulantTLS}), Toeplitz \& Hankel (\S \ref{Ap:toeplitz}})
    \item[\S \ref{Sec:conservative}:] 
    \myuline{Conservative:}
    {unitary (\S \ref{Sec:procrustes})}
    \item[\S \ref{Sec:selfAdjoint}:] 
    \myuline{Self-adjoint:}
    {symmetric (\S \ref{Sec:selfAdjoint}), skew-symmetric (\S \ref{Ap:skewSymmetric}), symmetric in a subspace (\S \ref{Sec:schrodinger})}
    \item[\S \ref{Sec:local}:]
    \myuline{Local:}
    {tridiagonal (\S \ref{Sec:local}), variable diagonals (\S \ref{Ap:localVar}), periodic (\S \ref{Ap:localPer}), symmetric tridiagonal (\S \ref{Ap:localSym}),  total least squares (\S \ref{Ap:localTLS}), regularized locality (\S \ref{Ap:localGen})}.
    \item[\S \ref{Ap:triangular}:]
    \myuline{Causal:} {upper triangular (\S \ref{Ap:triangularUpdate} \ref{Ap:triangularStable})}
\end{enumerate}
\begin{figure}[t]
	\centering
	\includegraphics[width=\linewidth]{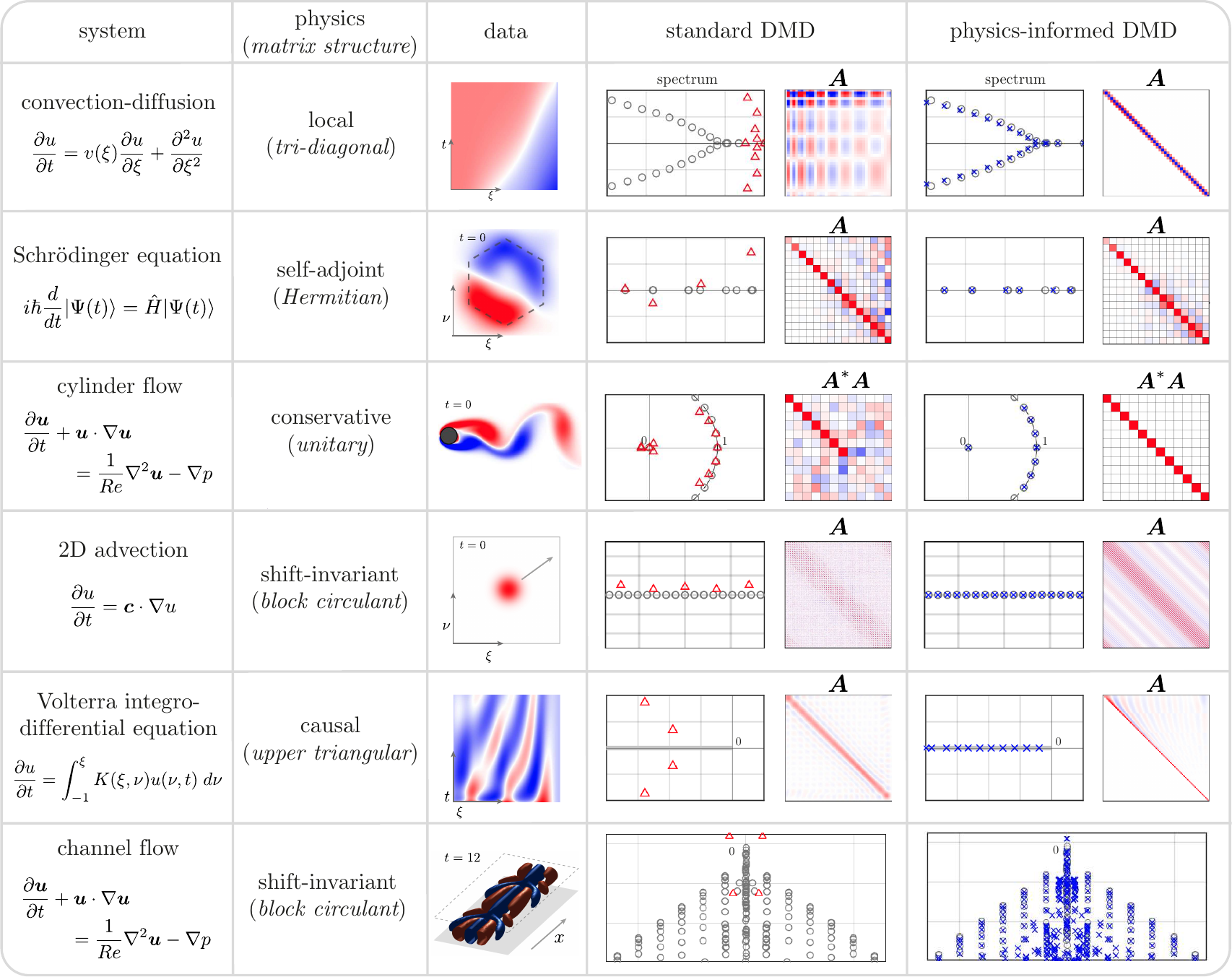}
	\caption{
	A comparison of the models learned by exact DMD and piDMD for a range of applications.
	piDMD identifies the spectrum of the true operator with higher accuracy than exact DMD.
	The structure of the model matrices are also illustrated; piDMD models are generally more coherent than those learned by exact DMD.
	Details are given in the corresponding sections.
In the spectrum subplots, we plot the true eigenvalues as 
{\protect\tikz \protect\draw[draw = gray,line width = 1pt] (.5,0) circle (3pt);},
the DMD eigenvalues as
{\protect\tikz \protect\draw[draw=red,thick] (0,0) --
	(0.2cm,0) -- (0.1cm,0.2cm) -- (0,0);}
, and the piDMD eigenvalues as 
{\protect\tikz \protect\draw[draw = blue, thick] (.5,0) node[blue, cross=3.5pt] {};}.
}
	\label{Fig:grid}
\end{figure}

\subsection{Shift-invariant systems}
\label{Sec:shiftInvariant}
Shift invariance (also called `translation invariance') is ubiquitous in the physical sciences.
Spatially homogeneous systems -- such as constant coefficient PDEs or convolution operators --
are shift invariant, and thus appear identical with respect to any (stationary) observer.
In view of Noether's theorem \cite{noether1918invariante}, the conserved quantity related to shift
invariance is linear momentum; as such, incorporating shift invariance into a DMD model means that the model preserves linear momentum.
While it may be unusual for practical engineering problems to have a truly homogeneous 
direction, many problems of basic physical interest -- such as wall-bounded turbulent flow --
frequently possess a spatially homogeneous dimension.
In the sequel, we derive a piDMD formulation for shift-invariant operators.

We define $\mathcal{S}_\phi$ as the $\phi$-shift operator
i.e. $\mathcal{S}_{\phi} v(\xi) = v(\xi + \phi)$ for all
test functions $v$.
We say that a space-continuous linear operator $\mathcal{A}$
is shift invariant if $\mathcal{A}$ commutes with the $\phi$-shift operator for all shifts $\phi$:
\begin{align}
	\mathcal{S}_\phi \mathcal{A} = \mathcal{A} \mathcal{S}_\phi.
	\label{Eq:shift}
\end{align}
An application of the shift operator shows that if $\mathcal{A}$ is shift invariant then the quantity
$\e^{-\lambda \xi} \mathcal{A} \e^{\lambda \xi}$ is constant for all $\xi$.
Thus, neglecting boundary conditions for now, $\{\e^{\lambda \xi}\}$ are eigenfunctions of $\mathcal{A}$.
If the domain is normalised to $[-1,1]$ and has periodic boundaries, then
 $\lambda = l \pi i$ for integer $l$.
In this case the corresponding eigenfunctions form an orthogonal basis and the operator is diagonalised by the known eigenfunctions.

In the following, we assume that we are studying a shift-invariant
system on a periodic domain $\xi \in [-1,1]$.
We move from the continuous formulation to a discretized space and assume that we are
studying a function $u(\xi,t)$
and have access to evenly spaced samples of $u$ at $\bxi =[ -1,\, -1+\Delta \xi, \dots, 1-\Delta \xi ]$.
We discuss the case of non-equally spaced samples in appendix \ref{Ap:nufft}.
If we define the state variable as $\bx(t) = \left[u(\xi_1,t), \,u(\xi_2,t), \cdots \,, u(\xi_n, t) \right]$
then the discrete-space linear operator $\bA$ that generates $\bx(t)$ is diagonalized by its eigenvectors:
\begin{align}
	\bA = \bFc \diag\left( \hat{\ba} \right) \bFc^{-1}
	\label{Eq:circulantDiag}
\end{align}
where $\bFc_{j,k} = \e^{2 \pi \i (j-1) (k-1)/n }/\sqrt{n}$ and $\bFc^{-1} = \bFc^\ast$ and $\{\hat{a}_j \}$ are the unknown eigenvalues.
Equation \eqref{Eq:circulantDiag} is equivalent to stating that $\bA$ is circulant:
\begin{align}
\bA=
\begin{bmatrix}
a_{0}&a_{n-1}&\dots &a_{1}\\
a_{1}&a_{0}&\ddots&\vdots\\
\vdots &\ddots&\ddots &a_{n-1} \\
a_{n-1}&\dots &a_{1}&a_{0}
\end{bmatrix},
\end{align}
i.e. $\bA_{j,k}= a_{(j-k)\, \textrm{mod}\, n}$.
If the boundary conditions are instead Dirichlet or Neumann then $\bA$ is not circulant but Toeplitz; we solve the corresponding Procrustes problem in \S \ref{Ap:toeplitz}.

Substituting \eqref{Eq:circulantDiag} into \eqref{Eq:procrustes} and 
noting that the Frobenius norm is invariant to unitary transformations allows
\eqref{Eq:procrustes} to be transformed to
\begin{align}
	\argmin_{\hat{\ba}} \left\|\diag(\hat{\ba}) \bXc - \bYc \right\|_F ,
	\label{Eq:circulant2}
\end{align}
where $\bXc = \bFc^\ast \bX$ and $\bYc=\bFc^\ast \bY$ are the spatial discrete Fourier transforms of $\bX$ and $\bY$.
As such, $\bXc$ and $\bYc$ can be efficiently formed in $O(m n\log(n))$ operations using Fast Fourier Transform (FFT) methods
\cite{Dutt1993}.

The rows of the cost function \eqref{Eq:circulant2} now decouple to
produce $n$ minimization problems:
\begin{align}
	\argmin_{\hat{a}_j} \|\hat{a}_j \tilde{\bXc}_j - \tilde{\bYc}_j \|_F
	\qquad \textnormal{for } 1 \leq j \leq n,
	\label{Eq:circulant2b}
\end{align}
where $\tilde{\bXc}_j$ and $\tilde{\bYc}_j$ are the $j$th rows of $\bXc$ and $\bYc$,
respectively.
An alternative view of this step is that spatial wavenumbers decouple in shift-invariant systems,
so we may analyse each wavenumber individually.
The optimal solution for each eigenvalue follows as
\begin{align}
	\hat{a}_{j} = \tilde{\bYc}_j \tilde{\bXc}_j^\dagger
	=\left.{\tilde{\bYc}_j\tilde{\bXc}_j^\ast }\middle/{\|\tilde{\bXc}_j \|^2_2}\right. .
	\label{Eq:circulantSol}
\end{align}
In principle, standard DMD methods \cite{Schmid2010,Tu2014} could be encouraged to respect shift invariance by augmenting the data
matrices with their shifted counterparts. However, one would strictly need to include $n$ shifts,
thus producing $nm$ samples which is usually too large in applications.
In contrast, the above solution is extremely efficient via the use of the FFT and the decoupling of wavenumbers.

The appendices include alternative solutions for cases where the system is shift-invariant and
symmetric, skew-symmetric or unitary (\S \ref{Ap:circulantSymmetric}),
low rank (\S \ref{Ap:circulantLowRank}),
the samples are not equally spaced (\S \ref{Ap:nufft}),
the total least squares case (\S \ref{Ap:circulantTLS}),
and when the system is Toeplitz or Hankel (\S \ref{Ap:toeplitz}).

\subsubsection{Example: plane channel flow}
Our next example is the incompressible flow inside a plane channel of size $2\pi\times 2\times 2\pi$ along the spatial $x$, $y$, and $z$ coordinates that indicate the streamwise, wall-normal, and spanwise directions, respectively. The configuration considers a Reynolds number of $\Rey=2000$ based on the channel half-height and the centerline velocity, and periodic boundary conditions in the open faces of the channel, hence the flow is homogeneous in the $x$ and $z$ directions. We use the spectral code \emph{Channelflow} \cite{Gibson2014} to perform direct numerical simulations (DNS) with the same numerical configurations as that presented in \cite{herrmann2021jfm}.

The dataset investigated is generated from a DNS of the response of the laminar flow with parabolic velocity profile to a localized perturbation in the wall-normal velocity component of the form
\begin{equation}
v(x,y,z,0)=\left(1-\frac{r^2}{c_r^2}\right)\left(\cos(\pi y) +1\right)e^{\left(-r^2/c_r^2-y^2/c_y^2 \right)},
\end{equation}
where $r^2=(x-\pi)^2+(z-\pi)^2$, the parameters are set to $c_r=0.7$ and $c_y=0.6$, and the amplitude of the perturbation was scaled to have an energy-norm of $10^{-5}$ to ensure that the effect of nonlinearity is negligible. This initial condition, which was first studied in \cite{ilak2008pof} and also in \cite{herrmann2021jfm}, is a model of a disturbance that could be generated in experiments using a spanwise and streamwise periodic array of axisymmetric jets injecting fluid perpendicular to the wall. The dataset obtained from this simulation is then comprised of a sequence of $300$ snapshots of the three-dimensional velocity-perturbation field recorded every $0.5$ time units.

\begin{figure}[t]
	\centering
	\includegraphics[width=.9\linewidth]{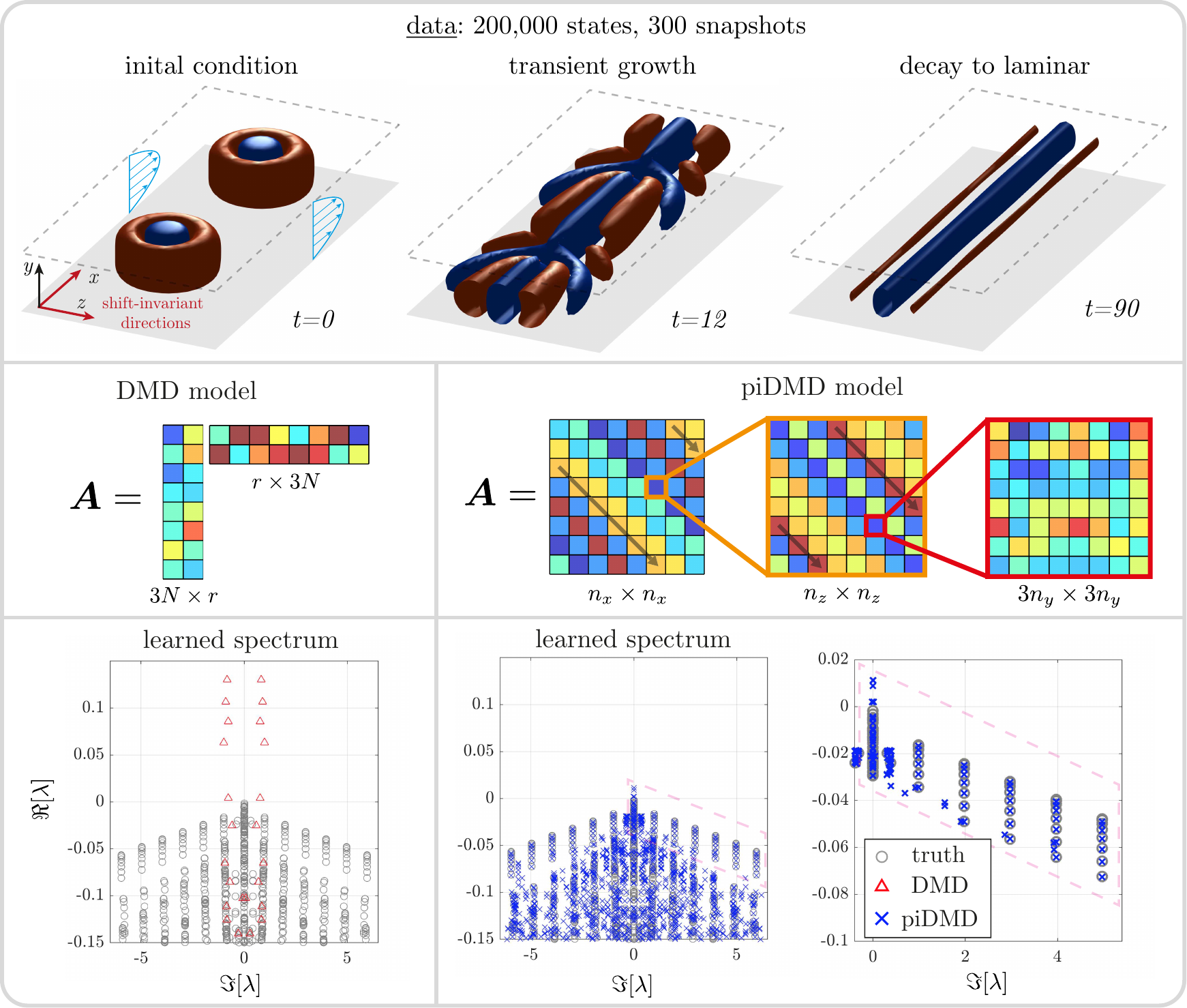}
	\caption{Learning the spectrum of the linearized Navier--Stokes equations
from velocity measurements of the response to a localized disturbance in a channel flow.
piDMD embeds the shift-invariant structure of the Navier--Stokes equations into the
model learning framework, thus learning the spectrum of the linearized operator
with improved accuracy over exact DMD.
	}%
	\label{Fig:channelflow}
\end{figure}

Our aim is to learn the spectrum of the underlying linear operator that \emph{best} describes the evolution of the DNS snapshots. When directly applying exact DMD to this dataset, the algorithm attempts to find global modes for the three-dimensional velocity field and their spectrum. This is challenging because modes associated to different streamwise and spanwise wavenumbers may be mixed, leading to spurious eigenvalues, as shown in figure~\ref{Fig:channelflow}.
However, we know that the flow is homogeneous in the $x$ and $z$ coordinates, and is therefore shift-invariant in these directions. Hence, we can leverage piDMD to incorporate this property, forcing the resulting matrix to respect, by construction, the structure with three nested levels shown in figure~\ref{Fig:channelflow}.
In practice, this amounts to reshaping every snapshot, taking the FFT in the dimensions corresponding $x$ and $z$, and performing DMD on the $y$-dependent Fourier amplitudes for every streamwise and spanwise wavenumber tuple. Although simple, this has a tremendous impact on the quality of the learned spectrum. Shift-invariant piDMD produces modes that are associated with a single wavenumber tuple, which forces the modes to respect the spatial periodicity of the underlying system.

The spectra learned from both approaches are compared in figure~\ref{Fig:channelflow}, showing that piDMD results in a far more accurate eigenvalue spectrum. The DMD calculation considers a truncated SVD of rank $200$. The piDMD results were obtained with a rank truncation of $50$ at every wavenumber tuple.
\subsection{Conservative systems}
\label{Sec:conservative}

Conservation laws are foundational to all areas of science.
Identifying quantities that remain constant in time --
such as mass, momentum, energy, electric charge, and probability
-- allow us to derive universal governing equations, apply inductive reasoning, 
and understand basic physical mechanisms.
In this section we demonstrate how conservation laws can be incorporated into the
DMD framework.

Suppose that we are studying a system that we know conserves energy.
In applications of DMD, it is implicitly assumed that measurements of the 
state have been suitably weighted so that the 
square of the 2-norm corresponds to the energy of the state~\cite{herrmann2021jfm}: \mbox{$E(\bx)  = \| \bx \|_2^2$}.
In these variables, 
the original optimization problems~(\ref{Eq:DMD}, \ref{Eq:procrustes})
equate to finding the model $\bA$ that minimises the energy of the error between
the true and predicted states ($\by_k$ and $\bA \bx_k$ respectively).
Thus, if $\bA$ represents a discrete-time linear dynamical system 
($\by_k = \bx_{k+1}= \bA \bx_k$)
then $\bA$ is an energy preserving operator if and only if
\begin{align}
	E(\bA \bx)=	\| \bA \bx \|_2^2 = \|\bx\|_2^2 = E(\bx) \qquad \textnormal{for all } \bx \in \mathbb{R}^n.
	\label{Eq:orth}
\end{align}
In words, \eqref{Eq:orth} states that $\bA$ does not change the
energy of the system but merely redistributes energy between the states.
In matrix terminology, \eqref{Eq:orth} holds if and only if $\bA$ is \emph{unitary}.
Therefore, for conservative systems the optimization problem \eqref{Eq:procrustes} is the orthogonal Procrustes problem described in section \ref{Sec:procrustes},
and the solution is given by \eqref{Eq:orthSol}.
The eigenvalues of $\bA$ lie on the unit circle, and the eigenvectors
are orthogonal.
Thus, the eigenvectors oscillate in time, with no growth or decay.
Since the solution of the orthogonal Procrustes problem \eqref{Eq:orthSol}
requires the full SVD of an $n\times n$ matrix, it can be more computationally 
efficient to first project onto the leading POD modes and build a model therein.
In this case, the model is only energy preserving within the subspace spanned by the leading POD modes.

Noise is a substantial problem for most DMD methods, and a common
remedy is to phrase the DMD optimization \eqref{Eq:DMD} as a total least squares problem \cite{Dawson2016}.
Perhaps surprisingly, the solution to the orthogonal Procrustes problem \eqref{Eq:orthSol} is 
also the solution to the total least squares problem when the solution
is constrained to be orthogonal \cite{Arun1992}.
Thus, the solution \eqref{Eq:orthSol} is optimal even when there is noise in both $\bX$ and $\bY$, as long as the noise has the same distribution in $\bX$ and $\bY$.
There are various problems closely related to the orthogonal Procrustes problem,
including the case where $\bA$ lies on the Steifel manifold (i.e., when $\bA$ is rectangular with orthonormal columns \cite{Elden1999}),
there is a weighting matrix \cite{Viklands2006},
missing data \cite{TenBerge1993}, or high amounts of noise \cite{Pumir2021}.

\subsubsection{Example: \texorpdfstring{$Re = 100$}~~flow past a cylinder}
\label{Sec:cylinder}

\begin{figure}[t]
	\centering
	\includegraphics[width=.85\linewidth]{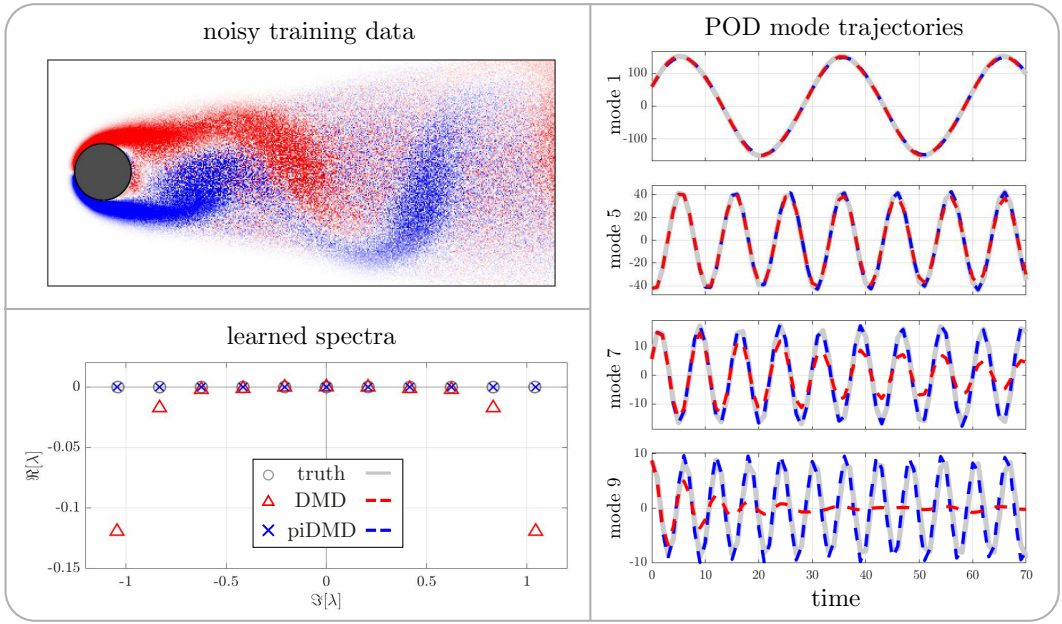}
	\caption{Learning the spectrum of the flow past a cylinder
	with noisy measurements with DMD and energy-preserving piDMD.}%
	\label{Fig:cylinder}
\end{figure}
We now apply this energy-preserving piDMD to study the flow
past a cylinder, which is a benchmark problem for modal decompositions \cite{Kutz2016}.
The Reynolds number is $Re = D U/\nu = 100$
where $D$ is the diameter of the cylinder, $U$ is the free-stream velocity and $\nu$
is the kinematic viscosity.
The data consist of 151 samples of vorticity measurements, corresponding to five periods of vortex shedding, taken on a grid of 199$\times$449 points.
We note that the 2-norm of the measurements is approximately constant in time.
The data are contaminated with 20\% Gaussian noise, as illustrated in the 
top left panel of figure \ref{Fig:cylinder}.
We truncate the data to the first 15 POD modes and learn a standard DMD model,
and an energy-preserving piDMD model.

piDMD learns a more accurate representation of the leading-order dynamics than standard DMD.
For the high-order modes, the eigenvalues learned by DMD exhibit spurious damping 
whereas the eigenvalues learned by piDMD are purely oscillatory and remain on the imaginary axis.
The trajectories of the POD coefficients are also more accurate for piDMD,
and do not exhibit the spurious energy loss caused by noise.

\subsection{Self-adjoint systems}
\label{Sec:selfAdjoint}

Self-adjoint systems are another important class of linear systems
that arise frequently in solid mechanics, heat and mass transfer, fluid mechanics,
and quantum mechanics.
When studying a system known to be self-adjoint
we can use piDMD to constrain our model $\bA$ to lie in the manifold of symmetric (Hermitian)
matrices, such that $\bA = \bA^\ast$.
Symmetric matrices have real eigenvalues and, by the spectral theorem, are diagonalisable.
This restriction places a significant constraint on the learned model
and can substantially reduce the sensitivity of DMD to noise.

The minimum-norm solution of the symmetric Procrustes problem~\cite{Higham2008} is 
\begin{align}
\bA = \bU_X \, \bL \, \bU_X^\ast,
\label{Eq:sym1}
\end{align}
where the entries of $\bL$ are

\begin{align}
	\bL_{i,j} = \overline{\bL_{j,i}} = \begin{cases}
		\dfrac{\sigma_i \overline{\bC_{j,i}} + \sigma_j \bC_{i,j}}{\sigma_i^2 + \sigma_j^2}
		& \textnormal{if } \sigma_i^2 + \sigma_j^2 \neq 0,\\
		\qquad \qquad 0 & \textnormal{otherwise},
\end{cases}
\label{Eq:symL}
\end{align}
where $\bC = \bU_X^\ast \bY \bV_X$
and $\bX = \bU_X \bSigma_X \bV_X^\ast$ is the SVD of $\bX$, and $\sigma_i$ is the $i$th singular value.
The solution of the skew-symmetric case is similar (\S \ref{Ap:skewSymmetric}),
and algorithmic solutions are available when $\bA$ is instead constrained
to be positive definite \cite{Gillis2018}.
For large scale systems, a low-rank approximation to $\bA$ can
be obtained by truncating the SVD of $\bX$. 
If the SVD is truncated to rank $r$, then $\bL$ is an $r \times r$ matrix.
This low-rank approximation preserves the self-adjointness of the model.

The symmetric Procrustes problem is connected to the orthogonal Procrustes problem (\S \ref{Sec:conservative})
via the fact that every unitary
matrix $\bU$ can be expressed as the exponential of a Hermitian matrix $\bH$
as $\bU = \exp(i \bH)$.

\subsubsection{Learning energy states of the Schr\"odinger equation}
\label{Sec:schrodinger}

Possibly the most famous self-adjoint operator is the quantum Hamiltonian, $\hat{H}$ \cite{Griffiths1995}.
In quantum mechanics, physical quantities of interest, such as position, momentum, energy, and spin, are represented
by self-adjoint linear operators called `observables', of which one example is the Hamiltonian.
The Hamiltonian describes the evolution of the probabilistic wave function via the time-dependent Schr\"odinger equation:
\begin{align}
i\hbar {\frac {d}{dt}}\vert \Psi (t)\rangle ={\hat {H}}\vert \Psi (t)\rangle.
\label{Eq:schrodinger}
\end{align}
Solutions of \eqref{Eq:schrodinger} take the form of an eigenmode expansion
\begin{align}
	\Psi(\boldsymbol{\xi},t) = \sum^{\infty}_{j = 1} 
	\alpha_{j} \, \psi_j(\boldsymbol{\xi})\,
	\e^{-\i E_{j} t/\hbar},
\end{align}
where $E_j$ is an energy level (eigenvalue) of $\hat{H}$ with corresponding eigenstate $\psi_j$,
the coefficients $\alpha_{j}$ are determined by the initial distribution
of the wave function, and $\boldsymbol{\xi}$ is the spatial coordinate.

\begin{figure}[t]
	\centering
	\includegraphics[width=\linewidth]{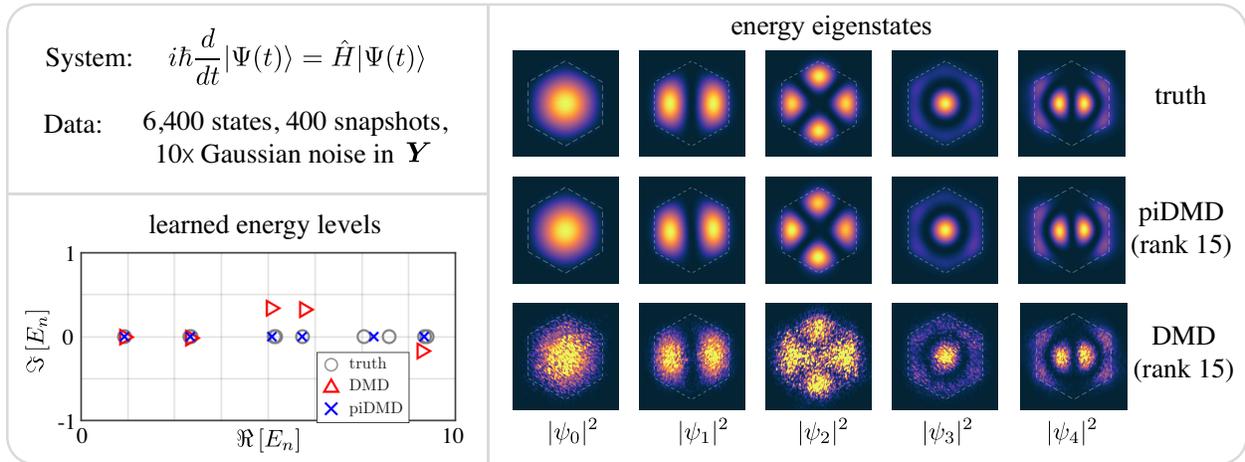}
	\caption{Learning the eigenstates of a quantum Hamiltonian of a hexagonal well with the self-adjoint piDMD.}%
	\label{Fig:Schrodinger}
\end{figure}

We consider a wave function evolving according to the two-dimensional Schr\"odinger's equation
subject to an unknown Hamiltonian function.
The unknown potential function is taken to be a finite well in a hexagonal shape.
We collect measurements of the wave function and its velocity
and train an exact DMD model and a piDMD model.
In practice, empirical measurements of the wavefunction can be obtained through optical 
tomographic methods \cite{Smithey1993,Vogel1989}
or sequential measurements of complementary variables \cite{Lundeen2011}.
The data matrices $\bX$ and $\bY$ are formed from measurements of $\vert \Psi (t)\rangle$ and $i \hbar \frac{d}{dt} \vert \Psi (t)\rangle$ respectively,
which are themselves formed by superposing eigenfunctions obtained by a finite difference method.
Thus, the matrices $\bX$ and $\bY$ are connected by an unknown self-adjoint Hamiltonian $\hat{H}$ and we may apply our self-adjoint DMD optimization.
The measurements of $\bY$ are contaminated with Gaussian noise but
the measurements of $\bX$ are clean.

Constraining the learned Hamiltonian to be self-adjoint reduces the sensitivity of the learning process to noise,
as evidenced by figure \ref{Fig:Schrodinger}.
From the bottom left panel, we see that the energy levels of the piDMD model are physically consistent insofar as they are all real and positive.
In contrast, the energy levels learned by standard DMD exhibit a spurious imaginary component, which will produce
unrealistic growth/decay of the eigenstates.
Both methods miss one eigenstate ($E_n \sim 7.5$) since this particular mode is not well-represented in the data.
Additionally, the eigenstates learned by piDMD are much less noisy than those learned by standard DMD.
This can be explained by noting that the eigenstates learned by piDMD lie in the span of $\bX$, which is clean,
whereas the eigenstates learned by standard DMD lie in the span of $\bY$, which is noisy.
Lemma \ref{NoiseLemma} proves that the symmetric piDMD model \eqref{Eq:sym1} is less sensitive to noise than the exact DMD model \eqref{Eq:exactDMD}.
In summary, a physics-aware approach enables a more accurate identification of quantum energy levels 
and the associated eigenstates.

\subsection{Spatially local systems}
\label{Sec:local}
A hallmark of most physical systems is \emph{spatial locality}.
Points close to one another will usually have a 
stronger coupling than points that are far from one another.
For example, basic physical processes such as convection and diffusion are spatially local.
In most DMD applications, the data are collected from a spatial grid, but the grid rarely plays a role in the DMD analysis beyond forming the energy norm \cite{herrmann2021jfm}. 
In fact, the output of the exact DMD algorithm remains invariant to unitary transformations on the rows and columns of the data matrices, so that randomly shuffling the rows (i.e., the spatial locations) will result in identical DMD models with the corresponding shuffling of the rows of the modes~\cite{Brunton2015jcd,Brunton2019}. 
Knowledge of the underlying grid from which the data were collected 
enables us to bias the learning
process to prioritise relationships that we expect to have a strong local coupling.

Herein we consider the one-dimensional case; the analysis generalises
straightforwardly to higher dimensions.
We assume that the entries of $\bx$ are samples of a function $u$ taken
at $n$ grid points $\{\xi_i\}$.
The grid points can be arranged sequentially so that: $\xi_i < \xi_{i+1}$.
By the spatial locality principle, we expect states that are spatially close to one another to have a stronger
coupling than states that are spatially far from one another.
For example, we may expect entries close to the diagonal of $\bA$ to
be larger than entries far from the diagonal. 
Then the entries of $\bA$ would satisfy
\begin{align}
	|A_{i,j}|\geq|A_{i,k}| \quad \quad \textnormal{for } \quad|j-i|<|k-i|.
	\label{Eq:local1}
\end{align}
Equation \eqref{Eq:local1} is merely a heuristic, and we would expect it
to hold on average rather than for each $(i,j,k)$.
Additionally, \eqref{Eq:local1} is a difficult and expensive condition to 
implement in practice as it involves $\mathcal{O}(n^2)$ inequality
constraints.
Instead, we consider an alternative version of spatial locality
where we only permit coupling between states that are 
sufficiently close:
\begin{align}
	|A_{i,k}|=0 \quad \quad \textnormal{for } \quad d<|k-i|.
	\label{Eq:local2}
\end{align}
Equation \eqref{Eq:local2} describes a $d$-diagonal matrix.
If $d=0$ then states can only affect themselves
and $\bA$ is diagonal (comparable to \eqref{Eq:circulant2}).
A more interesting case is if $d=1$ and we only allow coupling
between adjacent states. Then $\bA$ is tridiagonal:
the entries of $\bA$ are zeros except the leading, upper and lower diagonals:
\begin{align}
	\bA = 
	 \begin{bmatrix}
\beta_1 & \gamma_1 \\
\alpha_2 & \beta_2& \gamma_2 \\
& \alpha_3 & \ddots & \ddots \\
& & \ddots & \ddots & \gamma_{n-1} \\
& & & \alpha_{n} & \beta_n
\end{bmatrix}.
\end{align}
We now solve the optimization problem \eqref{Eq:procrustes} when $\cM$ is the manifold of tridiagonal matrices.
Appendix~\ref{Ap:local} includes many solutions for more general diagonal-like
structures including longer-range and variable coupling (\S~\ref{Ap:localVar}), 
spatially periodic local systems (\S~\ref{Ap:localPer}), 
self-adjoint (\S~\ref{Ap:localSym}), total-least squares (\S~\ref{Ap:localTLS}),
and weaker local structures (\S~\ref{Ap:localGen}).
Since the rows of $\bA$ are decoupled, we can row-wise expand the Frobenius norm
in \eqref{Eq:procrustes} to obtain $n$ smaller minimization problems:
\begin{align}
	\argmin_{\alpha_i,\, \beta_i,\, \gamma_i}
	\|\alpha_i \tilde{\bx}_{i-1} + \beta_i \tilde{\bx}_i + \gamma_i \tilde{\bx}_{i+1} - \tilde{\by}_i \|_2
	\qquad \qquad
	\textnormal{for }
	1\leq i\leq n,
	\label{Eq:tridiagonalMin}
\end{align}
with the convention $\alpha_1 = \gamma_n = 0$ and $\tilde{\bx}_0 = \tilde{\bx}_{n+1} = \boldsymbol{0}$.
Recall that $\tilde{\bx}_i$ and $\tilde{\by}_i$ are the $i$-th rows of $\bX$ and $\bY$ respectively \eqref{Eq:SnapshotMatrices}.
Each minimization problem \eqref{Eq:tridiagonalMin} has the 
minimum-norm solution
\begin{align}
	\begin{bmatrix}
		\alpha_i &
		\beta_i &
		\gamma_i 
	\end{bmatrix}
	= \tilde{\by}_i 
	\begin{bmatrix}
		\tilde{\bx}_{i-1}\\
		\tilde{\bx}_{i}\\
		\tilde{\bx}_{i+1}
	\end{bmatrix}^\dagger
	\qquad\qquad
	\textnormal{for }
	1\leq i\leq n.
	\label{Eq:localSol}
\end{align}
Each of the $n$ minimizations costs $\mathcal{O}(m)$ so the total cost is $\mathcal{O}(mn)$.

\subsubsection{Example: data-driven resolvent analysis}
We consider a system of the form
\begin{align}
	u_t = \mathcal{A} u + f(\xi,t)
\end{align}
and aim to design a control strategy for the forcing $f$ via input-output analysis
\cite{mckeon2010jfm,herrmann2021jfm,jovanovic2021arfm,trefethen1993science}.
Fourier transforming in time and rearranging produces
$\hat{u} = R_\omega \hat{f}$  where $R_\omega = \left(i \omega - \mathcal{A}  \right)^{-1}$
is the resolvent of $\mathcal{A}$
and the hats $\,\hat{\cdot}\,$ denote the Fourier transform in time.
We wish to understand the form of the forcing $\hat{f}$ that
produces the largest response in $\hat{u}$.
Formally, the optimal forcing ($\phi_1$), response ($\psi_1$), 
and gain ($\sigma_1$) for each frequency component satisfy
\begin{align}
	\phi_1 = \argmax_{\|\phi_1 \|_2=1}
\|R_\omega \phi_1 \|_2, \qquad
\sigma_1 = \|R_\omega \phi_1 \|_2, \qquad
\psi_1 = \sigma_1^{-1} R_\omega \phi_1.
\label{Eq:svd}
\end{align}
We may also investigate the higher-order singular triplets
$(\sigma_j, \, \phi_j, \, \psi_j)$ that satisfy equivalent conditions
with the additional requirement of orthogonality of $\{\phi_j\}$ and $\{\psi_j\}$.
Equivalently, we wish to determine the Hilbert--Schmidt decomposition of $R_\omega$
\cite{Kato1966}:
\begin{align}
	R_\omega 
	= \sum_{j = 1}^\infty \sigma_j \psi_j(\xi) \left<\phi_j,\, \cdot \right>,
\end{align}
which is analogous to the SVD of the discretization of $R_\omega$.

Recently, the authors have proposed `data-driven resolvent analysis'
as an efficient method for learning the singular triplets
of a system purely from data measurements \cite{herrmann2021jfm}.
The method is analogous to DMD, except instead of computing
the eigenmodes via eigendecomposition, we compute the resolvent modes
via a singular value decomposition.
In the following example, we demonstrate that piDMD can be integrated
into data-driven resolvent analysis to provide physics-informed diagnostic 
information about the optimal forcings and responses of the system.

We consider the problem of designing a strategy to control the concentration 
of a solute that is governed by unknown dynamics.
The solute concentration $u$ is governed by the convection-diffusion operator
$\mathcal{A}u = u_{\xi \xi} + a(\xi) u_\xi$
where $a$ represents a non-constant convection term (here taken to be a random function) and boundary conditions $u_\xi(-1) = u(1) = 0$.
A typical simulation is illustrated in the left panel
of figure~\ref{Fig:local}.
The convection-diffusion operator is a canonical example of a non-normal system where the typical eigenvalue analysis fails to provide meaningful information \cite{Reddy1994,Trefethen2005} .
Our goal is to determine the forcings $\hat{f}$ that produce
the largest increase in solute in the frequency domain.

\begin{figure}[t]
	\centering
	\includegraphics[width=\linewidth]{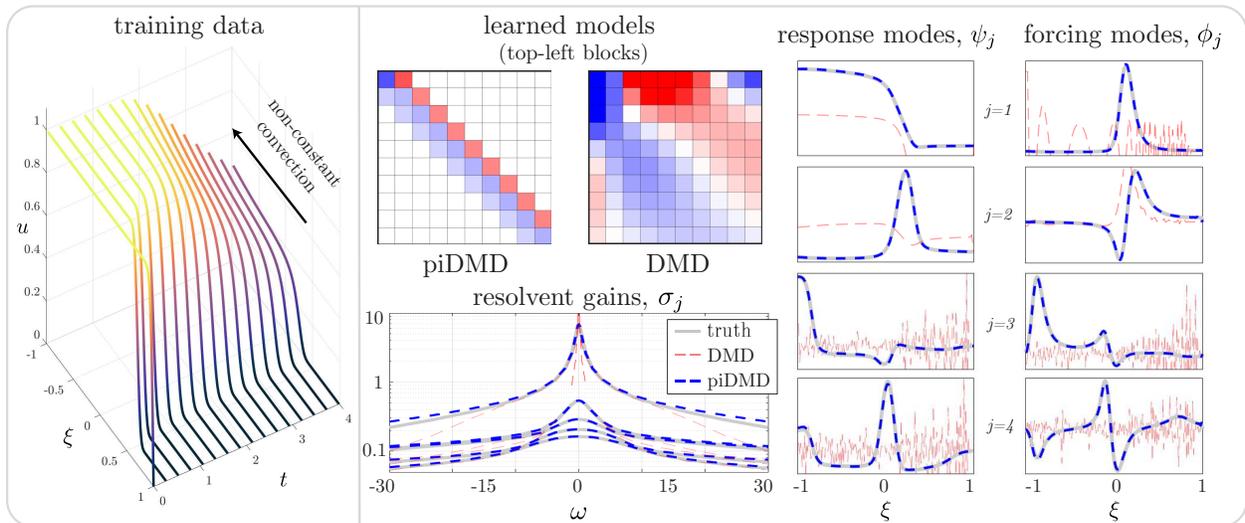}
	\caption{Data-driven resolvent analysis of the convection-diffusion
		equation for DMD and piDMD.
		The piDMD model is constrained to be tri-diagonal and therefore respects the spatial locality of the system.
		Incorporating this physical structure results a significant improvement in the identification of the resolvent norm, response modes, and forcing modes.
	}%
	\label{Fig:local}
\end{figure}

We solve the convection-diffusion equation in Chebfun \cite{Chebfun} and collect 2,000 snapshots of 
$u$ evaluated at 100 evenly spaced grid points.
The snapshots matrices $\bX$ and $\bY$ are formed from measurements of $u$
and $u_t$ respectively. 
We apply data-driven resolvent analysis with DMD and piDMD and compute the learned operators, resolvent norm, and forcing and response modes. 
For both algorithms, we approximate $u_t$ from the measurements of $u$.
The piDMD model is constrained to be tridiagonal, and the DMD model has rank 40.
From the visualizations of the learned models in the center 
of figure~\ref{Fig:local}, 
we see that the standard DMD identifies spurious relationships between the states,
whereas piDMD, by construction, identifies a stronger coupling between
adjacent states.
The model learned by piDMD is reminiscent of the finite difference matrix;
indeed, local piDMD may be viewed as a data-driven discovery of a finite difference stencil.

The benefit of piDMD is evidenced in its accurate identification of
the resolvent modes (for $\omega = 1$), as seen in the right panel of figure~\ref{Fig:local}.
The true resolvent modes are computed in Chebfun (see section 12 of \cite{Aurentz2017}).
The modes learned by standard DMD are highly inaccurate for all $j$, and are completely incoherent
for $j>2$.
In contrast, piDMD accurately determines the mode shapes, and associated
gains for a range of frequencies.
The physics-informed approach even determines the correct boundary conditions of the response and forcing modes, and resolves the complicated structures associated with the non-normality
of the system.
In particular, enforcing spatial locality enables piDMD to 
identify modes that are poorly represented by the data, which allows for 
diagnostic analysis that generalises outside of the training regime.
Equipped with these data-driven modes, one could now design a low-rank control
strategy for the solute concentration.

Analogously to finite difference methods, the accuracy of local piDMD involves a trade-off between the grid sizes in space and time.
Future work should focus on clarifying these issues.
\subsection{Causal systems}
\label{Ap:triangular}
Causality is the process by which the behaviour of one state (the cause) influences another state (the effect).
Many systems posses a spatially causal structure where each state depends only on `upstream' states and
are unaffected by `downstream' states.
Specifically, we may express the $j$th state as a function of the $n-j$ upstream states only:
$y_{j} = f(x_j, \, x_{j+1}, \, \dots \, x_n)$.
For example, $y_n$ depends only on $x_n$ whereas $y_1$ depends only all the elements of $\bx$.
Accordingly, if we know that the system at hand possesses a spatially causal structure
we may seek a piDMD model that is upper triangular.
Such structures are typical of a system with causal features,
for example, time-delay coordinates \cite{Brunton2017}, feed-forward/strict-feedback
systems \cite{Krishnamurthy2007,Teel1996}, and other control systems \cite{Annaswamy1994}.

As in section \ref{Sec:local}, we may decouple the rows of $\bA$ in \eqref{Eq:procrustes}
and expand the cost function row-wise to obtain $n$ smaller minimization problems:
\begin{align}
	\argmin_{\bA_{j,j:n}}
	\|\bA_{j,j:n} \bX_{j:n,\, :} - \tilde{\by}_j \|_2
	\qquad \qquad
	\textnormal{for }
	1\leq j\leq n.
	\label{Eq:triangularMin}
\end{align}
where we have employed the \textsc{Matlab} colon notation ${j\,{:}\,n}$ to indicate entries from $j$ to $n$
and $:$ to indicate entries from $1$ to $n$.
Here, $\tilde{\by}_j$ represents the $j$-th row of $\bY$.
The (minimum norm) solution of \eqref{Eq:triangularMin} is obtained via the pseudoinverse of each sub-block of $\bX$ as
\begin{align}
	\bA_{j,j:n} = \tilde{\by}_j \bX_{j:n,\, :}^\dagger
	\qquad \qquad
	\textnormal{for }
	1\leq j\leq n.
	\label{Eq:triangularSol}
\end{align}
Naively evaluating \eqref{Eq:triangularSol} for each $j$ would require building $n$ pseudoinverses
for a total cost of $\mathcal{O}(n^2 m \min(m,n))$.
Fortunately, consecutive blocks of $\bX$ (e.g. $\bX_{j-1:n,\, :}$ and $\bX_{j:n, \, :}$)
are related by a rank-1 update, so we can solve every problem in \eqref{Eq:triangularSol} 
for a total of $\mathcal{O}(nm \min(n,m))$ operations.
(see appendix \ref{Ap:triangularUpdate} for further details).
However, this approach is very numerically unstable
when applied to realistic data with high condition number.
Thus, in appendix \ref{Ap:triangularStable} we derive a more stable alternative.

We illustrate this causal structure on a simple example defined by
the Volterra-type integro-differential equation
\begin{align}
	\frac{\partial u}{\partial t}(\xi,t)
	= \int_{-1}^{\xi} K(\xi,\nu) u(\nu,t), \d \nu 
	\qquad \qquad -1 \leq \xi \leq 1.
	\label{Eq:intDiff}
\end{align}
Similar systems have been used to model transmission lines in neural networks 
during bursting activity \cite{Jackiewicz2008}
and the spread of disease in epidemics \cite{Medlock2003}.
Taking $K(\xi,\nu) = \sqrt{1-\xi^2} \sqrt{1-\nu^2}$ and $u(\xi,0) = \exp(-\xi^2)$,
we numerically simulate \eqref{Eq:intDiff} and construct $\bX$ and $\bY$ from
measurements of $u$ only (velocity measurements are not used).
The results of piDMD are illustrated in the penultimate row of figure \ref{Fig:grid},
and indicate that piDMD is able to learn the leading eigenvalues while standard DMD fails. Note that the true spectrum here is continuous.

\section{Conclusions and outlook}
\label{Sec:conclusion}
This work presents physics-informed dynamic mode decomposition (piDMD),
a physics-aware modal decomposition technique that extracts coherent structures from high-dimensional time-series data.
Rephrasing the DMD regression \eqref{Eq:DMD} as a Procrustes problem \eqref{Eq:procrustes} shows that partially-known physics can be incorporated into the DMD framework by enforcing a matrix manifold constraint determined by known physics. 
We have applied piDMD to five of the most fundamental physical principles: conservation laws,
self-adjointness, shift-invariance, locality, and causality.  
In several of these cases, we have presented new `exact' solutions of the corresponding optimization problems.
The examples presented demonstrate that piDMD can exhibit
superior performance compared to classical DMD methods that 
do not account for the physics of the system.

The framework developed herein offers exciting new opportunities and challenges for data-driven dynamical systems, numerical linear algebra, convex optimization, and statistics.
Fortunately, many research advances developed for classical DMD can also be leveraged to enhance piDMD. 
We conclude the paper with a brief discussion of the limitations of piDMD, and suggest promising future research directions.

\subsection{Limitations and extensions}

Each of the fours steps of the piDMD framework outlined in section \ref{Sec:piDMD} pose distinct challenges.
In some scenarios where the physics is poorly understood, determining suitable physical laws
to impose on the model (step 1, `modeling') can be challenging.
For problems with intricate geometries and multiple dimensions, interpreting the physical principle as a matrix manifold (step 2, `interpretation') can be the roadblock, as the manifold can become exceedingly complicated.
We have already commented on the challenges and opportunities posed by step 3 (`optimization'), and suggested algorithmic solutions \cite{Boumal2014}.
Finally, the typically large state dimension and number of samples can obfuscate step 4 when the matrix manifold does not easily admit the desired diagnostics. 
For example, if $\bA$ has a complicated banded structure, as in local piDMD, it may not be amenable to a fast eigendecomposition or SVD.

The solutions presented herein exhibit various levels of sensitivity to noise.
As is typical of DMD methods, most solutions are unbiased with respect to $\bY$, but can be quite sensitive to noise in $\bX$.
Some of the solutions we have presented are relatively insensitive (\S \ref{Sec:conservative}) or can be reformulated in the total least squares sense (\S \ref{Ap:circulantTLS}, \S \ref{Ap:localTLS}).
A full characterization of the sensitivity of piDMD should be performed on a case-by-case basis for different manifold constraints, and will be the subject of future investigations.

The Procrustes problem \eqref{Eq:procrustes} is not the only means of incorporating physical
principles into the DMD process.
Closely related to Procrustes problems are `nearest matrix' problems \cite{Higham1989}
where, given a matrix $\hat{\bA}$, we seek the
closest matrix on some matrix manifold $\cM$:
\begin{align}
	\argmin_{\bA \in \cM} \|\bA - \hat{\bA} \| .
	\label{Eq:nearest}
\end{align}
Equation \eqref{Eq:nearest} can be viewed as a special case of the Procrustes problem \eqref{Eq:procrustes} where $\bY = \hat{\bA}$ and $\bX = \bI$. As such, nearest matrix problems are generally easier
to solve than Procrustes problems.
The nearest matrix problem \eqref{Eq:nearest} can be used to project any DMD model onto the
nearest physically consistent model, or within optimization routines, such as proximal gradient
methods or constrained gradient descent, to constrain the solution to the feasible set.

As presented, piDMD strictly requires that the model $\bA$ lies on the chosen matrix manifold $\cM$.
In many applications, such as when the data are very noisy or the physical laws and/or constraints are only approximately understood, it may be more appropriate to merely `encourage' $\bA$ toward $\cM$.
In such cases, the piDMD regression \eqref{Eq:procrustes} becomes
\begin{align}
\argmin_{\bA} \|\bY - \bA \bX \|_F + \lambda \, R(\bA),
\end{align}
where the first term is the reconstruction loss, $R$ is a physics-informed regularizer, 
and $\lambda$ is a user-defined constant that tunes the relative importance
of the reconstruction and regularization.
For example, $R(\bA)$ could represent the distance between $\bA$ and $\cM$, which
can be computed by solving the nearest matrix problem \eqref{Eq:nearest}.
An example of such a problem is solved in appendix \ref{Ap:localGen}.

DMD models may be trained in discrete or continuous time.
In discrete time with a constant sampling rate, the snapshots in $\bY$
are $\by_k = \bx_{k+1}$, whereas in the continuous time we have $\by_k = \dot{\bx}_k$.
If the continuous time operator is $\bA_C$, then the discrete time operator is $\bA_D = \exp(\Delta t \bA_C)$.
In special cases, the matrices $\bA_C$ and $\bA_D$ may lie on the same manifold, such as the upper-triangular and circulant manifolds; however, in general, they do not.
For example, if $\bA_C$ is tridiagonal, then $\bA_D$ is not generally tridiagonal.
Accordingly, if only discrete-time measurements are available but the matrix manifold is imposed
on the continuous-time operator, then the optimization problem to solve is
\begin{align}
	\argmin_{\bA \in \cM} \|\bY - \exp (\Delta t \bA ) \bX \|_F ,
	\label{Eq:exponential}
\end{align}
which is a much more difficult optimization problem.

\section*{Acknowledgements} 
P.J.B. acknowledges insightful conversations with Suvrit Sra, Tasuku Soma and Andrew Horning.  
S.L.B. acknowledges valuable discussions with Jean-Christophe Loiseau. 
The authors acknowledge support from the Army Research Office (ARO W911NF-17-1-0306) and the National Science Foundation AI Institute in Dynamic Systems (Grant No. 2112085).

\appendix
\renewcommand{\thesection}{\Alph{section}}
\renewcommand{\thesubsection}{\Alph{section}.\arabic{subsection}}
\renewcommand{\thesubsubsection}{\Alph{section}.\arabic{subsection}.\arabic{subsubsection}}

\appendix
\section*{Appendices}
The following appendices provide further details of the optimization problems considered in this paper.

\section{Further details of shift-invariant systems}
Here we detail several extensions to the shift-invariant DMD problem introduced in section \ref{Sec:shiftInvariant}.
Figure \ref{Fig:shiftVariants} compares these alternative solutions for training data from the 2-D advection equation.
\subsection{Symmetric, skew-symmetric and unitary circulant matrices}
\label{Ap:circulantSymmetric}
Now consider the case where $\bA$ is symmetric
and circulant.
An application of \eqref{Eq:circulantDiag} reveals that a circulant matrix is symmetric if and only if its
eigenvalues $\{\hat{\ba}_i\}$ are real.
Thus, the minimization problem is
\begin{align}
	\argmin_{\Im[\hat{\ba}] = \boldsymbol{0}} \|\diag(\hat{\ba}) {\bXc} - {\bYc} \|_F^2.
\end{align}
Expanding the Frobenius norm row-wise and enforcing stationarity yields the optimal value as
\begin{align}
	\hat{a}_{j}  
	=\frac{\Re\left[\tilde{\bYc}_j\tilde{\bXc}_j^\ast \right] }
	{\left\| \tilde{\bXc}_j \right\|^2_2}.
\end{align}
The case of a skew-symmetric circulant matrix is similar 
except the eigenvalues are now imaginary; 
the solution is
\begin{align}
	\hat{a}_{j}  
	= \i \frac{\Im\left[\tilde{\bYc}_j\tilde{\bXc}_j^\ast \right] }
	{\left\|\tilde{\bXc}_j \right\|^2_2}.
\end{align}
Finally, for a unitary circulant matrix (when the eigenvalues lie on the unit circle), we have
\begin{align}
	\hat{a}_{j}  
	= \frac{\tilde{\bYc}_j\tilde{\bXc}_j^\ast }
	{\left\| \tilde{\bYc}_j \tilde{\bXc}_j^\ast \right\|_2}.
\end{align}
\begin{figure}[t]
	\centering
	\includegraphics[width=\linewidth]{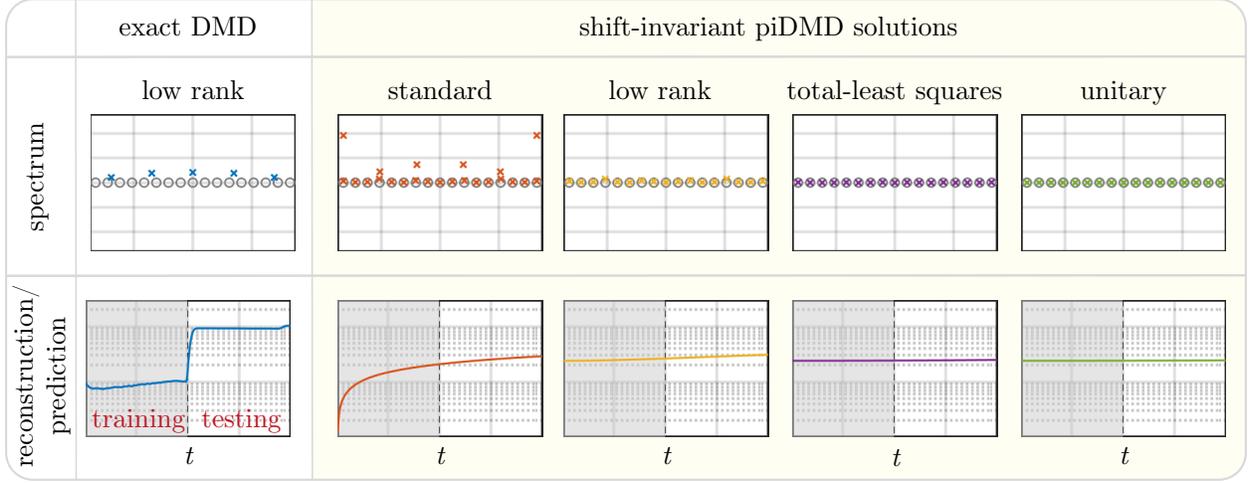}
	\caption{Comparison of exact DMD to shift-invariant piDMD and some variants.
	The model are trained on snapshots of a solution to the 2-D advection equation.
	The snapshots are contaminated by 2\% Gaussian noise.
The low-rank model has rank 45.}%
	\label{Fig:shiftVariants}
\end{figure}
\label{Ap:shiftInvariant}
\subsection{Best rank-\texorpdfstring{$r$}~~ circulant approximant}
\label{Ap:circulantLowRank}
Now consider the problem where $\bA$ is both circulant and rank $r\leq n$.
Since the rank of a matrix is the number of non-zero eigenvalues, 
the rank of a circulant matrix is the number of non-zero entries in $\hat{\ba}$.
Thus, only $r$ entries in $\hat{\ba}$ are permitted to be non-zero and
the minimization problem \eqref{Eq:circulant2} becomes
\begin{align}
	\argmin_{\|\hat{\ba}\|_0 = r} \|\diag(\hat{\ba}) \bXc - \bYc \|_F.
	\label{Eq:circulant3}
\end{align}
Equivalently, given the solution \eqref{Eq:circulantDiag}, we must select $r$ eigenvalues to maintain 
and set all other eigenvalues to zero.
To minimise the cost function, the $r$ chosen eigenvalues should contribute
to the largest reduction in the cost function.
The extent to which each eigenvector minimises the cost function is 
characterised by their residual:
\begin{align}
	{\rho}_j  = 
	\sqrt{	\|\tilde{\bYc}_j \|_2^2 -  \|\tilde{\bXc}_j - \hat{a}_j \tilde{\bXc}_j \|_2^2 } 
	= \frac{\Re\left[\tilde{\bYc}_j \tilde{\bXc}_j^\ast \right]}{\| \tilde{\bXc}_j \|_2}.
\end{align}
A large residual indicates a large reduction in the cost function and vice versa.
Therefore, the $r$ non-zero eigenvalues to maintain are the $r$ with the largest residuals.
\subsection{Learning shift-invariant operators with non-equispaced samples}
\label{Ap:nufft}
Here we extend the method of section \ref{Sec:shiftInvariant} to allow spatial samples that are not equally spaced.
We begin by formulating the Procrustes problem \eqref{Eq:procrustes} in continuous space.
Suppose that we have access to measurements of functions $u(\xi,t)$ and $v(\xi,t)$ for all $\xi \in [-1, 1]$
 but at discrete points $t \in \{t_j \ | \ j = 1, \dots, m \}$.
Then, the continuous-space piDMD regression \eqref{Eq:procrustes} is 
\begin{align}
	\argmin_{\mathcal{A}\in \cM} \sum_{j=1}^m \left\| v(\xi,t_j) - \mathcal{A} u(\xi, t_j) \right\|_2^2,
	\label{Eq:circulantCont}
\end{align}
where the continuous $2$-norm is
\begin{align}
	\|f(\xi) \|_2^2 = \int_{1}^{1} |f(\xi)|^2 \d \xi.
\end{align}
We now constrain $\mathcal{A}$ to be a shift-invariant operator on a periodic domain.
Such operators are diagonalized by exponential functions (\S \ref{Sec:shiftInvariant}) so we may write
\begin{align}
	\mathcal{A} f(\xi) = \sum_{k=-\infty}^\infty \hat{a}_k \e^{\i \pi k \xi}
	\hat{f}(k),
	\label{Eq:rep1}
\end{align}
where $\hat{f}(k)$ is the finite Fourier transform of $f$ at wavenumber $k$:
\begin{align}
	\hat{f}(k)
	= \int_{-1}^{1} f(\eta) \e^{- \i \pi k \eta}  \d \eta.
	\label{Eq:fourier}
\end{align}
Furthermore, if $u$ and $v$ are
sufficiently smooth in $\xi$ (e.g. they are H\"older continuous \cite{Zygmund1988})
then we may write
\begin{align}
    u(\xi, t)
    = \sum_{k=-\infty}^\infty \hat{u}(k, t) \e^{\i \pi k \xi},
    \qquad
    v(\xi, t)
    = \sum_{k=-\infty}^\infty \hat{v}(k, t) \e^{\i \pi k \xi}. \label{Eq:rep2}
\end{align}
Applying Parseval's theorem in concert with (\ref{Eq:rep1}) and  (\ref{Eq:rep2}) shows that the minimization problem \eqref{Eq:circulantCont} is equivalent to
\begin{align}
\argmin_{\hat{a}_k}	
\sum_{j=1}^m
\left\|
\sum_{k=-\infty}^\infty
\left(\hat{v}(k, t_j)- \hat{a}_k 
\hat{u}(k,t_j)
\right)
\e^{\i \pi k \xi}
\right\|_2^2
		  &= 
\argmin_{\hat{a}_k}	
\sum_{k=-\infty}^\infty
\sum_{j=1}^m
\left|\hat{v}(k, t_j)- \hat{a}_k
\hat{u}(k,t_j)
\right|^2.
\label{Eq:circulantCont2}
\end{align}
Furthermore, writing
$\hat{\bu} = \begin{bmatrix}\hat{u}(k, t_1) &\cdots &\hat{u}(k, t_m) \end{bmatrix}$ and
$\hat{\bv} = \begin{bmatrix}\hat{v}(k, t_1) &\cdots &\hat{v}(k, t_m) \end{bmatrix}$
transforms the right side of \eqref{Eq:circulantCont2} to
\begin{align*}
\argmin_{\hat{a}_k}	
\sum_{k=-\infty}^\infty
\left\|\hat{\bv}(k)- \hat{a}_k
\hat{\bu}(k)
\right\|_2^2,
\end{align*}
whose solution is
\begin{align}
    \hat{a}_k = \hat{\bv}(k) \hat{\bu}^\dagger(k) 
    = \left. \hat{\bv}(k) \hat{\bu}^\ast(k)\middle/ \|\hat{\bu}(k) \|_2^2 \right. .
    \label{Eq:circulantContSol}
\end{align}
Thus, \eqref{Eq:circulantContSol} solves the continuous-space shift-invariant piDMD regression \eqref{Eq:circulantCont}.
Note that the solution \eqref{Eq:circulantContSol}
depends only on the Fourier coefficients of $u$ and $v$ \eqref{Eq:rep2}.

Now, suppose that we only have
measurements of $u$ and $v$ at arbitrary discrete grid points $\xi \in \{\xi_l\}$.
We can then approximate the solution \eqref{Eq:circulantContSol}
by approximating the coefficients of the Fourier coefficients $\hat{\bv}(k)$ and $\hat{\bu}(k)$.
The samples at $\{\xi_l\}$ correspond to the quadrature rule for the Fourier coefficients \eqref{Eq:fourier}
\begin{align}
\hat{f}(k) \approx \sum_{l=1}^n w_l \ f(\xi_l) \e^{- \i \pi k \xi_l} \label{Eq:nufft}
\end{align}
where $w_k = (\xi_{k+1} - \xi_{k-1})/2 \ (\mathrm{mod}\ 1)$ are the quadrature nodes for the grid.
We can now use \eqref{Eq:nufft} to approximate $\hat{\bv}$ and $\hat{\bv}$ and thereby approximate the solution \eqref{Eq:circulantContSol}.
The sums \eqref{Eq:nufft} can be computed efficiently with
the non-uniform discrete Fourier transform (NUDFT--II, \cite{Dutt1993,Ruiz-Antolin2018}).

\subsection{Total least-squares circulant problem}
\label{Ap:circulantTLS}
A equivalent statement of the Procrustes problem \eqref{Eq:procrustes} is
\begin{align}
	\argmin_{\bA \in \cM}
	\| \bR\|_F \quad \textnormal{subject to  }
	\bA \bX = \bY + \bR.
\end{align}
As such, the formulation of \eqref{Eq:procrustes} assumes assumes that any errors are in $\bY$ only.
Alternatively, we can consider errors in both $\bX$ and $\bY$ and reformulate \eqref{Eq:procrustes} as
\begin{align}
	\argmin_{\bA \in \cM}
	\left\|\left[  \bR \;\;\; \bS \right]\right\|_F \quad \textnormal{subject to  }
	\bA \left(\bX + \bS \right) = \bY + \bR 
	\label{Eq:procrustesTLS}
\end{align}
This approach is usually more principled as there will be inevitable errors in measurements of $\bX$ and $\bY$.
Here we show that we can adapt the circulant solution of \S \ref{Sec:shiftInvariant} to account for noise in both sets of measurements.
Again, by the invariance of the Frobenius norm with respect to unitary transformations, \eqref{Eq:procrustesTLS} is equivalent to
\begin{align}
	\argmin_{\hat{\ba}}
	\left\|\left[  \bRc \; \; \; \bSc \right]\right\|_F \quad \textnormal{subject to  }
	\qquad
	\diag\left( \hat{\ba} \right) \left(\bXc + \bSc \right) = \bYc + \bRc
\end{align}
where $\bRc = \bFc^\ast \bR$ and $\bSc = \bFc^\ast \bS$.
Again, the rows decouple to produce $n$ smaller problems:
\begin{align}
	\argmin_{\hat{a}_i}
	\left\|\left[  \tilde{\bRc}_i \;\;\; \tilde{\bSc}_i \right]\right\|_F \quad \textnormal{subject to  } \quad
	\hat{a}_i \left(\tilde{\bXc}_i + \tilde{\bSc}_i \right) 
	= \tilde{\bYc}_i + \tilde{\bRc}_i.
	\label{Eq:procrustesTLSCirc}
\end{align}
for $1 \leq i \leq n$. These problems may now be solved in the total least squares sense \cite{Hemati2017tcfd,golub2012book}.

We will briefly characterize the scenario where the total least-squares circulant model is statistically optimal.
By optimal here, we mean that the model converges to the true model
in probability as the number of samples increases \cite{VanHuffel1991}.
In the unconstrained case, it is known that this is the case when the entries of $\bR$ and $\bS$ are
random variables with zero mean and
the covariance matrix of $\textnormal{vec}\left( [ \bR \;\;\; \bS ]\right)$
is a multiple of the identity matrix \cite{VanHuffel1991}.
Therefore, the solution of the total least squares problem \eqref{Eq:procrustesTLSCirc} 
will be statistically optimal if
\begin{align}
\textnormal{Var}\left({\bRc}_{i,j}\right) = \textnormal{Var}\left( {\bSc}_{i,j} \right) 
\qquad \textnormal{and}
\qquad \textnormal{Cov} \left( {\bRc}_{i,j},{\bSc}_{i,j} \right) = 0.
\end{align}
In words the circulant-TLS problem is statistically optimal if errors in $\bR$
and $\bS$ have equal variance for a given wavenumber and the errors between
$\textnormal{Cov} \left( {\bRc}_{i,j},{\bSc}_{i,j} \right)$ are independent.
These conditions are met if $\bR$ and $\bS$ are independent random variables of equal variance $\sigma$, in which case we have
$\textnormal{Var}\left({\bRc}_{i,j}\right) = \textnormal{Var}\left( {\bSc}_{i,j} \right) = \sigma^2$.
Therefore, if the unconstrained total least squares model is statistically optimal then so is the circulant piDMD model.

\subsection{Toeplitz and Hankel matrices}
\label{Ap:toeplitz}
Here we adapt the optimization of section \S \ref{Sec:shiftInvariant} to  Toeplitz matrices, which take the form
\begin{align}
\bA=
\begin{bmatrix}
	a_{0}&a_{-1}&\cdots &a_{-(n-1)}\\
	a_{1}&a_0&\ddots&\vdots \\
	\vdots &\ddots&\ddots&a_{-1}\\
	a_{n-1}&\cdots &a_{1}&a_{0}
\end{bmatrix}.
\end{align}
Similarly to the circulant case, $\bA_{i,j}= a_{i-j}$ but now the subscripts not are considered modulo $n$.
The Procrustes problem for a Toeplitz matrix was considered by \cite{Yang2013}.
We produce an alternative solution based on the fast Fourier transform, analogous to the solution
for the circulant Procrustes problem (\S \ref{Sec:shiftInvariant}).
A Toeplitz matrix can be embedded in a circulant matrix as
\begin{align}
	{\renewcommand\arraystretch{1.3}
	\bC = \mleft[
	\begin{array}{c|c}
		\bA & \tilde{\bA} \\\hline
		\tilde{\bA} & \bA
\end{array}
\mright]}
\qquad \textrm{ where } \qquad
\tilde{\bA}=
\begin{bmatrix}
{0}&a_{n-1}& \cdots &a_{1}\\
a_{-(n-1)}&0&\ddots &\vdots \\
\vdots &\ddots & \ddots & a_{n-1}\\
a_{-1} & \cdots & a_{-(n-1)} & 0
\end{bmatrix}
\end{align}
Similarly to \S \ref{Sec:shiftInvariant}, the circulant matrix $\bC$ is diagonalized by the DFT  matrix $\bFc$
(which is now $2n\times 2n$).
Thus, we may write $\bC = \bFc^\ast \diag(\hat{\ba}) \bFc$
and the minimization \eqref{Eq:procrustes} becomes
\begin{align}
	\argmin_{\hat{\ba}} \left\|
	\hat{\bI}^\ast\diag(\hat{\ba}) {\bXc} - \bY
	\right\|_F^2
	\label{Eq:costToeplitz}
\end{align}
where $\hat{\bI} =\bFc \begin{bmatrix} \bI \\ \boldsymbol{0} \end{bmatrix}$
and ${\bXc} = \bFc \begin{bmatrix} \bX \\ \boldsymbol{0} \end{bmatrix}$.
It can be shown that
the optimal coefficients $\hat{\ba}$ solve the linear system
\begin{align}
\bH \hat{\ba} = \bd
\qquad \textrm{ where }
\qquad 
\bH = \left(\hat{\bI} \hat{\bI}^\ast \odot \overline{{\bXc} {\bXc}^\ast}\right),\quad
\bd = \textrm{diag} \left( \hat{\bI} \bY {\bXc}^\ast \right)
\label{Eq:toeplitzSol}
\end{align}
and $\odot$ represents element-wise multiplication (i.e. the Hadamard product).
Note that a similar optimization arises in the computation of the amplitudes of DMD modes \cite{jovanovic2014pof}.
While mathematically correct, numerical implementations of \eqref{Eq:toeplitzSol}
can be difficult. Specifically, issues arise because the Gramian matrix ${\bXc} {\bXc}^\ast$ will typically be large and ill-conditioned. 
Thus, we expect that one could find more efficient algorithms to calculate $\bA$, 
possibly by adapting methods for fast solutions of Toeplitz systems
\cite{Levinson1946,Brent1980,Stewart2004}.

This approach also applies to Hankel matrices, which take the form
\begin{align}
\bA=\begin{bmatrix}
a_{0}&a_{1}&\ldots &a_{n-1}\\
a_{1}&a_{2}&\iddots& \vdots \\
\vdots & \iddots & \iddots &a_{2n-3}\\
a_{n-1}&\ldots &a_{2n-3}&a_{2n-2}
\end{bmatrix}
\end{align}
i.e. each ascending skew-diagonal is constant: $\bA_{i,j} = \ba_{i+j-2}.$
Hankel matrices are upside-down Toeplitz matrices: if $\bA$ is Hankel, then $\bJ \bA$ is Toeplitz, where $\bJ$ is ths upside-down identity matrix.
Thus, the Procrustes problem for a Hankel matrix is equivalent to the for a Toeplitz matrix
with a suitable flipping of the data.

\section{Further details of self-adjoint systems}
\label{Ap:selfAdjoint}

\subsection{Skew-symmetric systems}
\label{Ap:skewSymmetric}
If $\bA$ is constrained to be skew-symmetric then \eqref{Eq:sym1} and 
still holds and $\bC = \bU_X^\ast \bY \bV_X$ but now $\bL$ is now defined by
\begin{align}
	\bL_{i,j} = -\overline{\bL}_{j,i} = \begin{cases}
		\dfrac{-\sigma_i \overline{\bC}_{j,i} + \sigma_j \bC_{i,j}}{\sigma_i^2 + \sigma_j^2}
		& \textnormal{if } \sigma_i^2 + \sigma_j^2 \neq 0,\\
		\qquad \qquad 0 & \textnormal{otherwise}.
\end{cases}
\label{Eq:symS}
\end{align}



\subsection{Sensitivity of symmetric piDMD}
In light of the results of section \ref{Sec:selfAdjoint}, we now compare the sensitivities of the symmetric piDMD model \eqref{Eq:sym1} and exact DMD model \eqref{Eq:exactDMD}.
In particular, we will show that the elements of the piDMD model have lower variance than those of the corresponding exact DMD model.
Higham \cite{Higham1988} also investigated the sensitivity of the symmetric Procrustes problem, but from the perspective of numerical stability; here we take a statistical perspective.
In this section only, we denote the exact DMD solution \eqref{Eq:exactDMD} as $\bA^e$ and the symmetric piDMD solution \eqref{Eq:sym1} as $\bA^s$.

We assume that measurements of $\bY$ are contaminated with I. I. D. Gaussian noise.
For now, we neglect noise in $\bX$, which could be incorporated via a total least square approach \cite{Hemati2017tcfd,Dawson2016},
By linearity, it is sufficient to consider $\bY_{i,j} \sim \mathcal{N}(0, 1)$.
Again, since exact DMD and symmetric piDMD are linear models,
they are unbiased estimators:
\begin{align*}
    \E \left(\bA^{s}_{i,j} \right)
    =
    \E \left(\bA^{e}_{i,j} \right)
    = 0.
\end{align*}
We now compare the variance of the models.
Extensive algebra produces explicit expressions for the variances of $\bA^{e}_{i,j}$ and $\bA^{s}_{i,j}$ as
\begin{gather}
	\Var \left( {\bA^e}_{i,j} \right)
	= \sum_{l = 1}^{r} 
	\frac{\bU_{j,l}^2}{\sigma_l^2},\label{Eq:exactVar}
 \\[2ex]
	\Var(\bA^s_{i,j}) = \frac{1}{2} \sum^{r}_{k=1} \sum^{r}_{l=1}  \frac{\left(\bU_{i,k} \bU_{j,l} 
	+ \bU_{i,l} \bU_{j,k}\right)^2}{\sigma_k^2 + \sigma_l^2},
	\label{Eq:symmetricVar}
\end{gather}
where $\bU$ is the matrix of left singular vectors of $\bX$, $\{\sigma_l\}$ are the associated singular values, and $\textrm{rank}(\bX) = r$.
Figure \ref{Fig:symmetricNoise} compares these theoretical values to empirical values for random data.
These results are in agreement with figure \ref{Fig:Schrodinger} and indicate that the variance of the piDMD model is smaller than that of the exact DMD model.
\begin{figure}
    \centering
    \includegraphics[width = \linewidth]{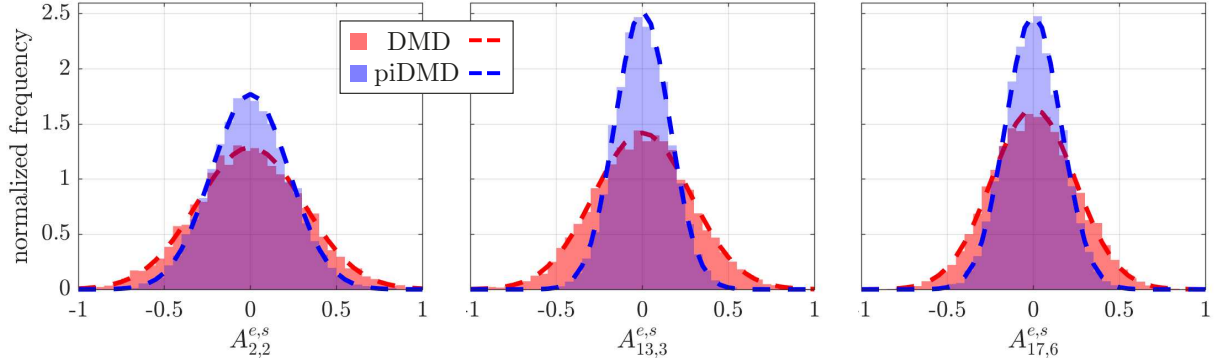}
    \caption{Distribution of errors in the entries of the learned models ($A^{e,s}_{2,2}$, $A^{e,s}_{13,3}$ and $A^{e,s}_{17,6}$) for exact DMD and self-adjoint piDMD. The block colors represent the frequency and the dashed lines are the theoretical curves (\ref{Eq:exactVar}, \ref{Eq:symmetricVar}).}
    \label{Fig:symmetricNoise}
\end{figure}
We formalize this result in the following lemma and proof.

\begin{lemma} \label{NoiseLemma}
If the measurements of $\bY$ are contaminated with I. I. D. Gaussian noise, the variance of each element of the symmetric piDMD model \eqref{Eq:sym1} is bounded above by the variance of the corresponding element of the exact DMD model \eqref{Eq:exactDMD}:
\begin{align}
	\Var(\bA_{i,j}^s) \leq \Var({\bA}_{i,j}^e).
\end{align}
\end{lemma}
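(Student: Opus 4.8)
The plan is to work directly from the two closed-form expressions \eqref{Eq:exactVar} and \eqref{Eq:symmetricVar}, so that the lemma reduces to a purely algebraic inequality in the entries of $\bU$. Writing $a_k=\bU_{i,k}$ and $b_k=\bU_{j,k}$, the first step is to replace every Cauchy denominator by the integral identity $1/(\sigma_k^2+\sigma_l^2)=\int_0^\infty e^{-t(\sigma_k^2+\sigma_l^2)}\,dt$, which is legitimate since $\sigma_k>0$ for all $k\le r=\mathrm{rank}(\bX)$. Expanding $(\bU_{i,k}\bU_{j,l}+\bU_{i,l}\bU_{j,k})^2$ and using the $k\leftrightarrow l$ symmetry of the denominator, I would rewrite the two variances as
\[
	\Var(\bA_{i,j}^s)=\int_0^\infty\!\big(f(t)g(t)+h(t)^2\big)\,dt,
	\qquad
	\Var(\bA_{i,j}^e)=\int_0^\infty\! g(t)\,dt,
\]
where $f(t)=\sum_k a_k^2 e^{-t\sigma_k^2}$, $g(t)=\sum_k b_k^2 e^{-t\sigma_k^2}$, and $h(t)=\sum_k a_k b_k e^{-t\sigma_k^2}$. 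Here $f(0)=\sum_k\bU_{i,k}^2=(\bU\bU^\ast)_{i,i}\le 1$, because $\bU\bU^\ast$ is an orthogonal projector (as $\bU$ has orthonormal columns), and $f(t),g(t)\to 0$ as $t\to\infty$.

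The argument then hinges on bounding $\int_0^\infty h(t)^2\,dt$, and the key step is a Cauchy--Schwarz inequality applied to the weighted splitting $a_kb_k e^{-t\sigma_k^2}=\big(a_k\sigma_k e^{-t\sigma_k^2/2}\big)\big(b_k\sigma_k^{-1}e^{-t\sigma_k^2/2}\big)$, namely
\[
	h(t)^2\;\le\;\Big(\sum_k a_k^2\sigma_k^2 e^{-t\sigma_k^2}\Big)\Big(\sum_k b_k^2\sigma_k^{-2}e^{-t\sigma_k^2}\Big)
	\;=\;\big(-f'(t)\big)\!\int_t^\infty\! g(s)\,ds,
\]
the point being that the first factor is the total derivative $-f'(t)$ while the second is exactly the tail $\int_t^\infty g$. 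Integrating in $t$ and interchanging the order of integration over $\{0\le t\le s\}$ then gives $\int_0^\infty h(t)^2\,dt\le\int_0^\infty g(s)\big(f(0)-f(s)\big)\,ds=f(0)\int_0^\infty g-\int_0^\infty fg$, and adding $\int_0^\infty fg$ to both sides yields
\[
	\Var(\bA_{i,j}^s)=\int_0^\infty fg+\int_0^\infty h^2\;\le\;f(0)\int_0^\infty g\;=\;(\bU\bU^\ast)_{i,i}\,\Var(\bA_{i,j}^e)\;\le\;\Var(\bA_{i,j}^e),
\]
which is the claim (in fact slightly more, since $(\bU\bU^\ast)_{i,i}$ may be strictly below one).

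I expect the only real obstacle to be spotting that particular Cauchy--Schwarz split. The obvious choice $a_kb_ke^{-t\sigma_k^2}=(a_ke^{-t\sigma_k^2/2})(b_ke^{-t\sigma_k^2/2})$ only delivers the pointwise bound $h(t)^2\le f(t)g(t)$, which gives the weaker $\Var(\bA_{i,j}^s)\le 2\int_0^\infty fg\le 2\Var(\bA_{i,j}^e)$ --- off by a factor of two. The asymmetric weights $\sigma_k$ and $\sigma_k^{-1}$ are precisely what make one factor a total derivative and the other a tail integral, so that the Fubini step closes the gap exactly. The remaining ingredients --- the integral representation, the symmetrisation of the double sum, and the row-norm bound $(\bU\bU^\ast)_{i,i}\le1$ --- are routine, so the write-up should be short.
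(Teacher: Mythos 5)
Your proof is correct, and it reaches the paper's conclusion by a genuinely different route. The paper's argument is purely discrete and essentially two lines: it applies Sedrakyan's inequality (Cauchy--Schwarz in Engel form) termwise to each pair $(k,l)$ in the double sum of \eqref{Eq:symmetricVar},
\begin{align*}
\frac{\left(\bU_{i,k} \bU_{j,l} + \bU_{i,l} \bU_{j,k}\right)^2}{\sigma_k^2 + \sigma_l^2}
 \leq \frac{\bU_{i,k}^2\bU_{j,l}^2}{\sigma_l^2}
+
\frac{\bU_{i,l}^2\bU_{j,k}^2}{\sigma_k^2},
\end{align*}
then sums, symmetrises in $k \leftrightarrow l$, and invokes $\sum_k \bU_{i,k}^2 \leq 1$. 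You instead keep the cross terms intact, pass to the Laplace representation $1/(\sigma_k^2+\sigma_l^2)=\int_0^\infty e^{-t(\sigma_k^2+\sigma_l^2)}\,\mathrm{d}t$, and control $\int_0^\infty h^2$ by the weighted Cauchy--Schwarz split with weights $\sigma_k$ and $\sigma_k^{-1}$ plus a Fubini exchange; all the steps check out (the singular values with $k\leq r$ are indeed positive, the integrands are nonnegative so Tonelli applies, and $-f'(t)$ and the tail of $g$ are exactly the two Cauchy--Schwarz factors). Notably, both routes land on the identical sharpened intermediate bound $\Var(\bA^s_{i,j}) \leq \left(\sum_k \bU_{i,k}^2\right)\Var(\bA^e_{i,j}) = (\bU\bU^\ast)_{i,i}\,\Var(\bA^e_{i,j})$, so neither is tighter than the other. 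What the paper's proof buys is brevity and elementarity; what yours buys is some structural insight --- it makes explicit why the naive termwise bound $h^2 \leq fg$ loses a factor of two and how the asymmetric weighting repairs it, and the Laplace-transform formulation could plausibly extend to settings where the discrete pair structure is less convenient. For the write-up accompanying this paper, the termwise Sedrakyan argument remains the shorter path to the same constant.
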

\begin{proof}
Sedrakyan's inequality \cite{Sedrakyan1997} produces
\begin{align*}
\frac{\left(\bU_{i,k} \bU_{j,l} 
+ \bU_{i,l} \bU_{j,k}\right)^2}{\sigma_k^2 + \sigma_l^2}
& \leq \frac{\bU_{i,k}^2\bU_{j,l}^2}{\sigma_l^2}
+
\frac{\bU_{i,l}^2\bU_{j,k}^2}{\sigma_k^2}.
\end{align*}
%
Therefore,
\begin{align}
	\Var(\bA^s_{i,j}) \leq
	\frac{1}{2} \sum^{r}_{k=1} \sum^{r}_{l=1}  
	\left(
	\frac{\bU_{i,k}^2\bU_{j,l}^2}{\sigma_l^2}
+
\frac{\bU_{i,l}^2\bU_{j,k}^2}{\sigma_k^2}
	\right)
	=\sum^{r}_{k=1} \sum^{r}_{l=1}  
	\frac{\bU_{i,k}^2\bU_{j,l}^2}{\sigma_l^2}
    =\left( \sum^{r}_{k=1} \bU_{i,k}^2 \right) 
    \Var(\bA^e_{i,j}).
\end{align}
The result now follows since the rows of $\bU$ have unit norm so $\sum_{k=1}^r \bU_{i,k}^2 \leq 1$.
\end{proof}

This result indicates that the symmetric piDMD model is less sensitive to noise than the exact DMD model when the noise is only in $\bY$. Future work should investigate the case when both $\bX$ and $\bY$ are contaminated with noise, and clarify the effect of noise on the learned spectrum.

\section{Further details of local systems}
\label{Ap:local}

\subsection{More general local structures}
\label{Ap:localVar}
The local solution of section \ref{Sec:local} naturally extends
to more general local structures.
For example, it may be necessary to consider longer range interactions between states and allow, say, a state to depend on other states three grid points away.
Additionally, there may be a variable local dependence where different states 
can depend on different numbers of elements.
For example, a state near boundary (e.g. $x_1$ or $x_n$) may need to depend on more than one
other state in order to properly resolve the boundary behaviour.
To make this precise, suppose that state $i$ depends on $l(i)$ states to the right
and $u(i)$ states to the left above the leading diagonal
(assuming that the data is organised from left to right).
Then, the solution for the optimal matrix entries is
\begin{align}
	\begin{bmatrix}
		A_{i-l(i),i} & 
		\cdots &
		A_{i,i} &
		\cdots &
		A_{i+u(i),i}
	\end{bmatrix}
	= \tilde{\by}_i 
	\begin{bmatrix}
		\tilde{\bx}_{i-l(i)}\\
		\vdots\\
		\tilde{\bx}_{i}\\
		\vdots\\
		\tilde{\bx}_{i+u(i)}
	\end{bmatrix}^\dagger
	\qquad\qquad
	\textnormal{for }
	1\leq i\leq n
\end{align}
with the convention $A_{i,j} = 0$ if the entry does not exist and $\tilde{\bx}_i = \boldsymbol{0}$ for $i<1$ or $i>n$.
We recover the tridiagonal solution \eqref{Eq:localSol} when $l(i) = u(i) = 1$.

\subsection{Local and periodic}
\label{Ap:localPer}
The tridiagonal solution in section \ref{Sec:local} effectively
assumed that the system had two endpoints that were 
disconnected from one another.
However, the spatial domain could be periodic, in which case 
the first state can affect the $n$th
state and vice versa.
If we allow this structure then the matrix $\bA$ takes the form
\begin{align}
	\bA = 
	 \begin{bmatrix}
		 \beta_1 & \gamma_1 & & & \alpha_1 \\
\alpha_2 & \beta_2& \gamma_2 \\
& \alpha_3 & \ddots & \ddots \\
& & \ddots & \ddots & \gamma_{n-1} \\
\gamma_n & & & \alpha_{n} & \beta_n
\end{bmatrix}.
\end{align}
The solution for the optimal constants is identical to 
those stated in \eqref{Eq:localSol} except now the subscripts are evaluated modulo $n$.

\subsection{Symmetric tridiagonal}
\label{Ap:localSym}
Many spatially local systems are also self-adjoint.
In such cases, we may enforce that $\bA$ is both tri-diagonal and symmetric:
\begin{align}
	\bA = 
	 \begin{bmatrix}
\alpha_1 & \beta_1 \\
\beta_1 & \alpha_2& \beta_2 \\
& \beta_2 & \ddots & \ddots \\
& & \ddots & \ddots & b_{n-1} \\
& & & b_{n-1} & \alpha_n
\end{bmatrix}
\end{align}.
The cost function \eqref{Eq:procrustes} is then expressible as
\begin{align}
	\| \bY - \bA \bX  \|_F^2 &= 
 \|\tilde{\by}_1 - \alpha_1 \tilde{\bx}_1 - \beta_1 \tilde{\bx}_2  \|_2^2 
+ \| \tilde{\by}_n - b_{n-1} \tilde{\bx}_{n-1} - \alpha_n \tilde{\bx}_n \|_2^2 \\
&+ \sum_{i = 2}^{n-1} \|\tilde{\by}_i -b_{i-1} \tilde{\bx}_{i-1} 
- \alpha_i \tilde{\bx}_i - b_{i} \tilde{\bx}_{i+1} \|_2^2.
\end{align}
Enforcing stationarity in $\bc = \begin{bmatrix} \alpha_1 & \cdots & \alpha_n & \beta_1 & \cdots & b_{n-1}\end{bmatrix}^T$ 
produces the linear system
\begin{align}
	\bT \bc = \bd \label{Eq:symtrisys}
\end{align}
where $\bT$ is the symmetric block-tridiagonal matrix
\begin{align}
	{\renewcommand\arraystretch{1.3}
	\bT = \mleft[
	\begin{array}{c|c}
		\bT_1 & \bT_2 \\\hline
		\bT_2^\ast & \bT_3
\end{array}
\mright]}.
\end{align}
where the blocks are
\begin{gather}
\bT_1 = 
\mathrm{diag}\left( \|\tilde{\bx}_1\|_2^2\, , \, \cdots \,,\, \|\tilde{\bx}_n\|_2^2\right),\\[2ex]
\bT_2 = 
 \begin{bmatrix}
\tilde{\bx}_2^\ast \tilde{\bx}_1 &  \\
\tilde{\bx}^\ast_2 \tilde{\bx}_1 & \tilde{\bx}_3^\ast \tilde{\bx}_2 &  \\
  & \tilde{\bx}_4^\ast \tilde{\bx}_3 & \ddots & \\
& & \ddots & \ddots & \\
& & & \tilde{\bx}_{n-1}^\ast \tilde{\bx}_{n-2} & \tilde{\bx}_{n}^\ast \tilde{\bx}_{n-1}\\
& & & & \tilde{\bx}_n^\ast \tilde{\bx}_{n-1}
\end{bmatrix},\\[2ex]
\bT_3 =  
 \begin{bmatrix}
 \tilde{\bx}_2^\ast \tilde{\bx}_2 +
 \tilde{\bx}_1^\ast \tilde{\bx}_1& \tilde{\bx}_3^\ast \tilde{\bx}_1  \\
 \tilde{\bx}_3^\ast \tilde{\bx}_1 
& \tilde{\bx}_3^\ast \tilde{\bx}_3 + \tilde{\bx}_2^\ast \tilde{\bx}_2 
& \tilde{\bx}_4^\ast \tilde{\bx}_2  \\
& \tilde{\bx}_4^\ast \tilde{\bx}_2 & \ddots & \ddots & & \\
& & \ddots & \ddots &  \tilde{\bx}_{n}^\ast \tilde{\bx}_{n-1} \\
& & & \tilde{\bx}_{n}^\ast \tilde{\bx}_{n-1} & \tilde{\bx}_{n}^\ast \tilde{\bx}_{n-1},
\end{bmatrix}
\end{gather}
and
\begin{align*}
	\bd = 
	\begin{bmatrix}
		\tilde{\bx}_1^\ast \tilde{\by}_1 &
		\cdots &
		\tilde{\bx}_n^\ast \tilde{\by}_n &&
		\tilde{\bx}_1^\ast \tilde{\by}_2
		+ \tilde{\bx}_2^\ast \tilde{\by}_1 &
		\cdots &
		\tilde{\bx}_{n-1}^\ast \tilde{\by}_n
		+ \tilde{\bx}_n^\ast \tilde{\by}_{n-1}
	\end{bmatrix}^\ast.
\end{align*}
Note that $\bT_1 \in \mathbb{R}^{n\times n}$, 
$\bT_2 \in \mathbb{R}^{n\times(n-1)}$,
and $\bT_3 \in \mathbb{R}^{(n-1)\times(n-1)}$.
The system \eqref{Eq:symtrisys} can be solved very efficiently by exploiting the sparsity and symmetry of $\bT$.

\subsection{Local total least squares}
\label{Ap:localTLS}
This estimator is unbiased if there is only noise in $\bY$.
If $\bX$ and $\bY$ are both contaminated by noise then \eqref{Eq:tridiagonalMin} can be suitably adapted
and solved via total least squares \cite{VanHuffel1991}, provided that the noise is normally distributed 
and has the same (diagonal) covariance matrix for $\bX$ and $\bY$.
\subsection{Weaker spatial locality}
\label{Ap:localGen}
A weaker version of spatial locality can also be enforced by penalising
entries via
\begin{align}
	\argmin_{\bA} \|\bY - \bA \bX \|_F + \lambda\|\bH \odot \bA \|_F 
	\label{Eq:localWeak}
\end{align}
where $\bH$ is a matrix that penalises the solution
for grid points that are spatially far from one another
and $\lambda$ is a regularization parameter.
For example,  a suitable choice
for $\bH$ could be the Gaussian kernel matrix
\begin{align}
\bH_{i,j} = \exp\left( \|\xi_i - \xi_j \|_2/(2 \sigma^2 )\right)
\end{align}
for some parameter $\sigma$, where $\{\xi_i\}$ are the known grid points.
For general $\bH$, the solution to \eqref{Eq:localWeak} is 
\begin{align*}
    \tilde{\bA}_i = 
    \tilde{\by}_i \bX^\ast 
    \left(
    \bX \bX^\ast + \textnormal{diag}(\tilde{\bh}_i )
    \right)^{-1}
    \label{Eq:weakSol}
\end{align*}
for $1\leq i \leq n$ where $\tilde{\bh}_i$ is the $i$-th row of $\bH$.
Unfortunately, since the regularizer penalises
every entry of $\bA$, the implementation of \eqref{Eq:weakSol} costs $\mathcal{O}(n^4)$ operations 
to compute the full model $\bA$.
Thus, we recommend pursuing inexact but efficient solutions, possibly using descent methods \cite{BoydBook}.

\section{Further details for causal systems}
\subsection{Updating equations for upper triangular system}
\label{Ap:triangularUpdate}
Here we present an efficient implementation
of the solution {\eqref{Eq:triangularSol}}.
Consecutive blocks of $\bX$ (e.g. $\bX_{j-1:n,:}$ and $\bX_{j:n,:}$) are related by the rank-1 update
\begin{align}
	\bX_{j-1:n,:} = \begin{bmatrix}
	\boldsymbol{0}\\
	\bX_{j:n,:}
	\end{bmatrix}
	+ 
	\begin{bmatrix}
		1\\  \boldsymbol{0}
	\end{bmatrix}
	\tilde{\bx}_{j-1}.
\end{align}
This observation enables us to efficiently calculate the pseudoinverse $\tilde{\bX}_{j-1:n,:}^\dagger$ from $\tilde{\bX}_{j:n,:}^\dagger$.
The following formulas come from \cite{Meyer1973} (after some simplification).
Let $\hat{\bx}_{j-1} = \tilde{\bx}_{j-1}\left( \bI - \bX_{j:n,:}^\dagger \bX_{j:n,:} \right)$
be the orthogonal projection of $\tilde{\bx}_j$ onto the complement of the row space of $\bX_{j:n,:}$.
If $\tilde{\bx}_{j-1}$ lies in the row space of $\bX_{j:n,:}$ (i.e.~$\|\hat{\bx}_{j-1} \|_2\neq0$)
then
\begin{align}
	\bX_{j-1:n,:}^\dagger = \begin{bmatrix}
		\hat{\bx}_{j-1}^\dagger && \left( \bI - \hat{\bx}_{j-1}^\dagger \tilde{\bx}_{j-1} \right)\bX_{j:n,:}^\dagger
	\end{bmatrix}
	\label{Eq:update1}
\end{align}
otherwise,
\begin{align}
	\bX_{j-1:n,:}^\dagger = \begin{bmatrix}
		\boldsymbol{0} & \bX_{j:n,:}^\dagger
	\end{bmatrix}
	\left( \bI - \bv_j^\dagger \bv_j \right)
	\label{Eq:update2}
\end{align}
where $\bv_j = \begin{bmatrix}1 & -\tilde{\bx}_{j-1} \bX_{j:n,:}^\dagger \end{bmatrix}$.
Note that \eqref{Eq:update1} and \eqref{Eq:update2} can be formed using only matrix-vector products.
It turns out that the above formulation is unstable for realistic data 
measurements (with high condition number).
Below, we present an alternative algorithm that is  as fast but more stable.

\subsection{Alternative (more) stable solution}
\label{Ap:triangularStable}
We now derive an alternative solution to the upper-triangular piDMD problem of section \ref{Ap:triangular}.
In our experience, this version is more stable than that of the previous section.
We use the economy RQ decomposition of $\bX$ and write $\bX = \bR \bQ$
where $\bR \in \mathbb{C}^{n\times n}$ 
is upper triangular, and $\bQ \in \mathbb{C}^{n\times m}$ satisfies $\bQ \bQ^\ast = \bI$.
Note that
\begin{align}
	\|\bY - \bA \bX \|_F^2 
	&=\|\bY\|_F^2 - \|\bY\bQ^\ast\|_F^2
       +  \| \bY \bQ^\ast - \bA \bR \|_F^2. \label{Eq:ut3}
\end{align}
The first two terms in \eqref{Eq:ut3} are independent of $\bA$ and, by the submultiplicativity of 
the Frobenius norm\footnote{In particular, $\|\bC \bD \|_F \leq \| \bC\|_F \|\bD\|_2$ for any matrices $\bC$ and $\bD$.
Note the $2$-norm in the second term on the right.}, have a non-negative sum.
Thus, the upper-triangular Procrustes problem may be phrased as
\begin{align}
	\argmin_{\bA \in \mathcal{M}} \| \bY - \bA \bX \|_F =
	\argmin_{\bA \in \mathcal{M}} \| \bY \bQ^\ast - \bA \bR \|_F.
	\label{Eq:ut}
\end{align}
Now, since the product of upper triangular matrices is upper triangular, $\bA \bR$ is also  upper triangular.
Thus, the lower triangular component of $\bY \bQ^\ast$ is irrelevant to the minimization problem and we may write
the problem as
\begin{align}
	\argmin_{\bA \in \mathcal{M}} \| \texttt{triu}\left( \bY \bQ^\ast \right) - \bA \bR \|_F,
	\label{Eq:ut2c}
\end{align}
where, in \textsc{Matlab} notation, $\texttt{triu}(\cdot)$ extracts the upper triangular portion of its argument.
Since the rows of $\bA$ are independent, the Frobenius norm may be expanded row-wise and minimized separately.
Thus, the rows of $\bA$ solve the $n$ smaller minimization problems
\begin{align}
	\argmin_{{\bA}_{i,:}} \| 
\left( \bY \bQ^\ast \right)_{i,i:n} - \bA_{i,i:n} {\bR}_{i}\|_F
\qquad \qquad \textnormal{for } 1\leq i \leq n,
\label{Eq:ut1a}
\end{align}
where $\bR_i = \bR_{i:n,i:n} \in \mathbb{R}^{(n-i+1) \times n-i+1}$ is the $i$-th bottom right block of $ \bR$
and \mbox{$\bA_{i,i:n} \in \mathbb{R}^{n-i+1}$} is the $i$-th row of $\bA$ with the $i-1$ leading zeros removed.
Each minimization admits a unique solution if each $\bR_{i}$ is full rank, which occurs if the pivots of $\bR$ are non zero, i.e. $\bX$ is full rank.
If the data is rank deficient, as is often the case in applications, there are infinitely many solutions.
In any case, we seek the smallest solution (in the Frobenius norm sense), which is given by
\begin{align}
	\bA_{i,i:n} = \left( \bY \bQ^\ast \right)_{i,i:n} {\bR}_{i}^\dagger.
	\label{Eq:ut2}
\end{align}
Calculating $\bA$ via \eqref{Eq:ut2} naively requires computing each pseudoinverse $n$ times for a total of
$\mathcal{O}(n^4)$ operations.
As in \S \ref{Ap:triangularUpdate}, this complexity can be lowered with an updating procedure:
following \cite{Meyer1973}, we can efficiently compute ${\bR}_i^\dagger$ using ${\bR}_{i+1}^\dagger$.
Thus, we iterate through \eqref{Eq:ut2} backwards, starting at $i=n$.
when the (scalar) pseudoinverse is given by ${\bR}_{n,n}^\dagger = 1/{\bR}_{n,n}$.
Proceeding inductively, we note that the next matrix block can be expressed as a rank-one update to the
previous block:
\begin{align}
	{\bR}_{i} = 
	 \begin{bmatrix}
		0 & \boldsymbol{0} \\
		\boldsymbol{0} & {\bR}_{i+1}
	\end{bmatrix}
	 + \begin{bmatrix}
		 1 \\ \boldsymbol{0}
	 \end{bmatrix}
	 {\bR}_{i,i:n}.
\end{align}
The updating formula takes one of two forms depending on whether the pivot ${\bR}_{i,i}$ is 
smaller than some tolerance $\epsilon_{\textrm{tol}}$ close to machine precision:
\paragraph{Case 1: \textnormal{ $|{\bR}_{i,i} |> \epsilon_{\textrm{tol}}$}}
\begin{align}
	{\bR}_{i}^\dagger = 
	\renewcommand\arraystretch{2}
	 \begin{bmatrix}
		 1/{\bR}_{i,i} & {\bR}_{i,i:n}^\ast {\bR}_{i+1}^\dagger/{\bR}_{i,i}  \\
		 \boldsymbol{0} & {\bR}_{i+1}^\dagger
	\end{bmatrix}.
	\label{Eq:utCase1}
\end{align}
\paragraph{Case 2:\textnormal{ $|{\bR}_{i,i} |< \epsilon_{\textrm{tol}}$}}
\begin{align}
	{\bR}_{i}^\dagger = 
	\renewcommand\arraystretch{1}
	 \begin{bmatrix}
		 0 &  \boldsymbol{0} \\
		 \boldsymbol{0} & {\bR}_{i+1}^\dagger
	\end{bmatrix}
	\left( \bI
	-\frac{\boldsymbol{\alpha}_i
	\boldsymbol{\alpha}_i^\ast}{\|\boldsymbol{\alpha}_i \|_2^2}
	\right)
	\label{Eq:utCase2}
\end{align}
where $\boldsymbol{\alpha}_i =  ( {\bR}_{i,i:n} {\bR}_{i+1}^\dagger )^\ast -\begin{bmatrix} 1 & \boldsymbol{0} \end{bmatrix}^T$.
While mathematically correct, the accuracy of \eqref{Eq:utCase1} and \eqref{Eq:utCase2}
can deteriorate over many iterations.
Thus, in applications it may be sensible to reform the pseudoinverse ${\bR}_i^\dagger$ from scratch every so often.

The eigenvalues of $\bA$ can also be computed without forming $\bA$ explicitly.
Note that if ${\bR_{i,i}} \neq 0$ then the upper-triangular structure of ${\bR}_i$ implies that 
$\bA_{i,i} = \left( \bY \bQ^\ast \right)_{i,i}/\bR_{i,i}$.
Further, the eigenvalues of a triangular matrix are its diagonal entries, so $\{\bA_{i,i}\}$ are the eigenvalues of $\bA$ (for $\bR_{i,i} \neq 0$)
and can be computed without forming $\bA$ fully.


 \begin{spacing}{.9}
 \small{
 \setlength{\bibsep}{3.5pt}
\bibliographystyle{ieeetr}
 \bibliography{library}
 }
 \end{spacing}
\end{document}